\newcommand{\labitem}[2]{%
\def\@itemlabel{\text{(#1)}}
\item
\def\@currentlabel{#1}\label{#2}}
\DeclareFontFamily{U}{mathx}{\hyphenchar\font45}
\DeclareFontShape{U}{mathx}{m}{n}{
      <5> <6> <7> <8> <9> <10>
      <10.95> <12> <14.4> <17.28> <20.74> <24.88>
      mathx10
      }{}
\DeclareSymbolFont{mathx}{U}{mathx}{m}{n}
\DeclareMathAccent{\widecheck}{0}{mathx}{"71}
\newcommand{\PP}{\ensuremath{\mathbb{P}}}
\newcommand{\HH}{\ensuremath{\mathcal{H}}}
\newcommand{\NN}{\ensuremath{\mathbb{N}}}
\newcommand{\TT}{\ensuremath{\mathbb{T}}}
\newcommand{\op}[1]{\operatorname{#1}}
\theoremstyle{plain}
\newtheorem{tw}{Theorem}[section]
\newtheorem{stw}[tw]{Proposition}
\newtheorem{lem}[tw]{Lemma}
\newtheorem{wn}[tw]{Corollary}
\theoremstyle{definition}
\newtheorem{defin}[tw]{Definition}
\theoremstyle{remark}
\newtheorem{uw}[tw]{Remark}
\begin{document}
\author{Mateusz Wasilewski}
\address{Institute of Mathematics of the Polish Academy of Sciences, ul. \'{S}niadeckich 8, 00-956 Warszawa, Poland}
\email{wasilewskimat@gmail.com}

\title{Amalgamated direct sums of operator spaces}

\begin{abstract}
We consider amalgamated direct sums (and their dual counterparts -- fibre products) of operator spaces and study their behaviour with respect to different quantisations (minimal and maximal). We show examples of amalgamated direct sums of two $L^{\infty}$-spaces over a common subspace that are not minimal themselves, thus answering a question posed by Vern Paulsen.
\end{abstract}

\maketitle

\section{Introduction}
An operator space $X$ is a closed subspace of $\op{B}(\HH)$, considered with induced norms on the spaces of complex matrices $\op{M}_n \otimes X \subset \op{M}_n(\op{B}(\HH)) \cong \op{B}(\HH^{\oplus n})$. The sequences of norms $(\|\cdot\|_{n})_{n \in \NN}$ that arise in this way on $(\op{M}_n(X))_{n \in \NN}$ have been characterised by Ruan (cf. \cite[Theorem 2.3.5]{ER}). This abstract characterisation enabled a quick development of the theory of operator spaces, which is by now very rich. On the one hand, it serves as powerful tool for solving problems in the theory of operator algebras (cf. \cite{Bl} and \cite{BRS}) and harmonic analysis (cf. \cite{Ru}). On the other hand, one can try to investigate the theory itself; this is the approach that we will follow. More precisely, we will study quantisations, i.e.~methods of introducing an operator space structure on a given Banach space; we will restrict our attention to the well-known minimal and maximal quantisations (cf. \cite[Examples 2.3, 2.4]{BP}). It is quite well understood how direct sums and quotients behave with respect to these functors. In this paper we would like to begin investigation of amalgamated direct sums and fibre products in this context.

We will begin with recalling useful facts from the theories of Banach and operator spaces in Section \ref{prelim}. In Section \ref{amalg} we provide definitions of amalgamated direct sums and fibre products and prove basic facts about them. In the next section we study quotients/subspaces of minimal/maximal operator spaces, both as an interesting subject in itself and as a preparation for the last section. Section \ref{mainproof} contains the main goal of this paper, which is the following theorem.
\begin{tw}\label{main}
There exist amalgamated $\ell_{\infty}$-direct sums (defined precisely in Section \ref{amalg}) of two $L^{\infty}$-spaces over a common subspace that are not completely isomorphic to minimal operator spaces.
\end{tw}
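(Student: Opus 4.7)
The plan is to recast the amalgamated direct sum as a quotient of a minimal operator space, and then to exhibit non-minimality of this quotient via a classical obstruction coming from the von Neumann inequality in several variables. By the definition recalled in Section \ref{amalg}, the amalgamated $\ell_{\infty}$-direct sum $X_1 \oplus_Z X_2$ is the Banach space quotient of $X_1 \oplus_{\infty} X_2$ by the closed subspace $\Delta Z = \{(i_1(z), -i_2(z)) : z \in Z\}$. Since $X_1 \oplus_{\infty} X_2$ is again an $L^{\infty}$-space and therefore carries the minimal quantisation, the problem reduces to producing $L^{\infty}$-spaces $X_1, X_2$ with common subspace $Z$ such that the quotient operator space structure on $(X_1 \oplus_{\infty} X_2)/\Delta Z$ is not completely isomorphic to $\op{MIN}$ of the underlying Banach space.

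For this, I would use the criterion — presumably developed in the earlier section on quotients of minimal operator spaces — that a quotient $L/W$ of a minimal operator space fails to be completely isomorphic to $\op{MIN}$ of its underlying Banach space as soon as one exhibits a bounded linear map $\varphi: L/W \to \op{B}(\HH)$ whose completely bounded norm cannot be uniformly bounded by a constant multiple of its operator norm. To supply such a $\varphi$, I would invoke a Varopoulos-style construction: on a commutative $C^{*}$-algebra $L = C(K)$ one can choose functions $f_1, \ldots, f_k$ so that the associated linear map to matrices is bounded but not completely bounded, owing to the failure of the von Neumann inequality for three or more commuting contractions. Choosing $W \subseteq L$ inside the kernel of this map, and large enough that the induced map on $L/W$ is injective on the span of the $f_i$, transports the failure of complete boundedness to the quotient.

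The final step is organisational: decompose $L = X_1 \oplus_{\infty} X_2$ for two $L^{\infty}$-spaces $X_1, X_2$, for instance by partitioning $K$, and choose the embeddings $i_1, i_2$ of $Z$ so that $\Delta Z = W$; this is a straightforward graph-type construction. The principal obstacle lies in the quantitative separation of the operator norm from the completely bounded norm on the quotient, since one needs to preclude not just that the quotient structure differs from $\op{MIN}$, but that \emph{no} re-quantisation coming from a commutative $C^{*}$-algebra embedding could match it up to complete isomorphism. This is exactly what the Varopoulos obstruction delivers, and establishing the required lower bound on the cb-to-operator-norm ratio — carried out at some fixed matrix level $n$ — is the computational heart of the argument.
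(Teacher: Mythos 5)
Your first step is fine and matches the paper: the amalgamated $\ell_{\infty}$-direct sum is a quotient of an $L^{\infty}$-space (hence of a minimal operator space) by a copy of the common subspace, so the problem reduces to exhibiting a quotient of $\op{MIN}(L^{\infty})$ that is not completely isomorphic to a minimal space. The gap is in your criterion for non-minimality, and it is fatal as stated. You propose to certify non-minimality of $L/W$ by producing a bounded map $\varphi: L/W \to \op{B}(\HH)$ whose cb norm is not controlled by its operator norm. But that is the criterion for failing to be \emph{maximal}, not minimal: minimality guarantees that bounded maps \emph{into} the space are completely bounded, and says nothing about maps \emph{out} of it. Indeed $Q$ is completely isomorphic to a minimal operator space if and only if the identity $\op{MIN}(Q) \to Q$ is completely bounded, and this places no bound whatsoever on $\|\varphi\|_{cb}$ for maps $\varphi$ leaving $Q$. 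A concrete counterexample to your criterion: $\op{MIN}(\ell_2)$ is minimal, yet the identity map onto the row Hilbert space $\mathcal{R} \subset \op{B}(\HH)$ is isometric but not completely bounded (its cb norm on the $n$-dimensional pieces grows like $\sqrt{n}$). So a Varopoulos/von Neumann-type construction of a bounded, non-cb map out of the quotient proves nothing about whether the quotient is minimal; such maps exist out of honest minimal spaces. What you actually need is a lower bound on the quotient matrix norms (an infimum over liftings) against the $\op{MIN}$ norms, and your proposal never produces one.

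The paper gets around exactly this difficulty by dualising: the dual of the quotient $\op{MIN}(L^{\infty})/W$ is the subspace $W^{\perp} \subset \op{MAX}(L^{1})$, and for subspaces of $\op{MAX}(L^1)$ the out-of-the-space criterion \emph{is} the right one. Concretely, if $W^{\perp}$ were maximal, any bounded map $W^{\perp} \to \ell_2$ would be completely bounded into $\mathcal{R}$, would extend to $\op{MAX}(L^1)$ by injectivity of $\mathcal{R}$, and would then be absolutely summing by Grothendieck's theorem; so it suffices to find a non-absolutely-summing map $W^{\perp} \to \ell_2$ (Theorem \ref{nonmax}), which the Kalton--Pe\l czy\'nski theorem supplies whenever $W^{\perp}$ is the kernel of a surjection from $L^1$ onto a space of infinite cotype or a K-convex space. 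The paper then realises the amalgamated sum in this form using a span of Steinhaus variables as the common subspace, checking by a symmetry computation that the diagonal copy $\{(z,-z)\}$ is again a Steinhaus span inside $L^{\infty}(\Omega \sqcup \Omega')$. Note also that your final ``graph-type'' step is not automatic: to realise a prescribed $W \subset X_1 \oplus_{\infty} X_2$ as $\{(i_1(z),-i_2(z)) : z \in Z\}$ you need both coordinate restrictions on $W$ to be (completely) isometric onto a common $Z$, a constraint your construction does not address and which the paper's Steinhaus computation is precisely designed to verify. If you want to salvage your outline, replace your criterion by the dual one above; the Varopoulos-style obstruction, living on the wrong side of the arrows, cannot be made to work here.
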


\section{Preliminaries}\label{prelim}
We will need several well-known facts from the theory of Banach and operator spaces. The background from Banach spaces is more modest, so let us start with it. The next theorem uses the notion of an absolutely summing operator; the relevant background on this topic may be found in \cite{DJT}.
\begin{tw}[Grothendieck, \cite{Gr}]
Let $T:L^{1}(\Omega,\mu) \to \HH$ be a bounded operator, where $\HH$ is a Hilbert space. Then $T$ is absolutely summing.
\end{tw}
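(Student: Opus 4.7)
The plan is to bound the $1$-summing norm $\pi_1(T)$ by a multiple of $\|T\|$, first reducing the estimate to a finite-dimensional computation on $\ell_1^n$ and then invoking Grothendieck's inequality.

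For the reduction, recall that $\pi_1(T)$ equals the supremum of $\sum_j \|T f_j\|/w((f_j))$ over all finite families $(f_j) \subset L^1(\Omega,\mu)$, where $w((f_j)) := \sup_{\|\phi\|_\infty \leq 1}\sum_j |\phi(f_j)|$ is the weak $\ell^1$-norm. Any such finite family can be approximated in $L^1$-norm by simple functions adapted to a common finite measurable partition of $\Omega$, and such simple functions span a subspace of $L^1(\Omega,\mu)$ isometric to $\ell_1^n$ (with an appropriately weighted norm, but still an $\ell_1^n$-space). By continuity of $T$ and of $w$ in each slot, it suffices to prove $\pi_1(S) \leq C\|S\|$ uniformly in $n$ for every bounded operator $S:\ell_1^n \to \HH$.

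For such an $S$, set $x_i := S(e_i) \in \HH$, so that $\|S\| = \max_i \|x_i\|$. Given any finite family $f_j = \sum_i a_{ij} e_i$, choose unit vectors $y_j \in \HH$ realising $\langle S f_j, y_j\rangle = \|S f_j\|$. Then
\[
\sum_j \|S f_j\| \;=\; \sum_{i,j} a_{ij} \langle x_i, y_j\rangle \;\leq\; K_G \bigl(\max_i \|x_i\|\bigr)\bigl(\max_j \|y_j\|\bigr) \sup_{|s_i|,|t_j| \leq 1}\Bigl|\sum_{i,j} a_{ij} s_i t_j\Bigr|
\]
by Grothendieck's inequality, where $K_G$ is Grothendieck's constant. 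A direct computation identifies the final supremum with $w((f_j))$ (via the duality between $\ell_\infty^n$ and $\ell_1^n$: $\sup_{|s_i|\leq 1}|\sum_i a_{ij} s_i| = \|f_j\|_1$, and linearising over $t_j$ yields the weak $\ell^1$-norm). Since also $\max_j\|y_j\| = 1$, one concludes $\pi_1(S) \leq K_G \|S\|$.

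The main obstacle is Grothendieck's inequality itself, a deep fact without which the argument is inert; I would quote it as a black box (it is treated thoroughly in \cite{DJT}). A secondary and very minor issue is the bookkeeping involved in the reduction step: one must verify that the weights appearing when passing from $L^1$-simple functions to $\ell_1^n$ cancel against the bound on $\|S\|$ uniformly in $n$, which is immediate since the reduction is through isometric embeddings.
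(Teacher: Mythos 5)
The paper does not prove this statement at all: it is quoted as a classical result with a citation to Grothendieck's R\'esum\'e, and the remark following it only notes that it is an equivalent form of Grothendieck's theorem (referring to \cite{Pi1} for background). Your argument is therefore not comparable to a proof in the paper, but it is the standard textbook deduction of this ``operator'' form from the ``inequality'' form, and it is correct: the reduction to $\ell_1^n$ via simple functions over a common finite partition works (the span of the normalised indicators is isometrically $\ell_1^n$, $\|S\|\leqslant\|T\|$, and by Hahn--Banach the weak $\ell^1$-norm of a family in the subspace is the same whether computed against the unit ball of the subspace's dual or of $(L^1)^{\ast}$, a point worth saying explicitly); and the finite-dimensional estimate $\pi_1(S)\leqslant K_G\|S\|$ follows exactly as you write, since linearising first in $t_j$ and then in $s_i$ identifies the supremum with $w((f_j))$ (your parenthetical ``$\sup_{|s_i|\leqslant 1}|\sum_i a_{ij}s_i|=\|f_j\|_1$'' is a slightly misleading aside, as one cannot optimise $s$ separately for each $j$; the joint computation you then describe is the right one). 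Two cosmetic caveats: in the complex setting relevant to the paper one should use the complex Grothendieck constant, and ``absolutely summing'' here means $1$-summing, which is what your supremum defines. Declaring Grothendieck's inequality a black box is legitimate and not circular, since the inequality and the statement being proved are formally distinct assertions whose equivalence is precisely what your argument (in one direction) establishes; this is the same logical position the paper takes by citing the result.
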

\begin{uw}
This is actually an equivalent form of Grothendieck's result. For more on this topic, see \cite{Pi1}. 
\end{uw}
\begin{uw}
It is irrelevant for this particular theorem, but we will deal only with localisable measure spaces (cf. \cite[211G]{Fr}) to keep the duality $(L^1)^{\ast} \cong L^{\infty}$ (cf. \cite[243G]{Fr}).
\end{uw}
Now let us recall basic facts about minimal and maximal quantisation\footnote{By quantisation we mean a method of imposing an operator space structure on a given Banach space.} of a given Banach space $X$, right after we define them.
\begin{defin}
An operator space $X$ is \textbf{minimal} if and only if for every operator space $Y$ any contraction $T:Y \to X$ is a complete contraction.

An operator space $X$ is \textbf{maximal} if and only if for every operator space $Y$ any contraction $T:X \to Y$ is a complete contraction.
\end{defin}
\begin{stw}[cf. {\cite[\textsection 3.3]{ER}}]
Minimal and maximal quantisations are dual to each other, more precisely:
\begin{enumerate}[{\normalfont (i)}]
\item $(\op{MIN}(X))^{\ast} \cong \op{MAX}(X^{\ast})$;
\item $(\op{MAX}(X))^{\ast} \cong \op{MIN}(X^{\ast})$,
\end{enumerate}
where `` $\cong$'' denotes a completely isometric isomorphism. In particular, an operator space $X$ is minimal (maximal) if and only if its dual is maximal (minimal).

Minimality (maximality) is preserved by $\ell_{\infty}$-direct sum ($\ell_1$-direct sum).

Minimality passes to subspaces and maximality descends to quotients.
\end{stw}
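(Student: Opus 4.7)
The plan is to verify each assertion from the universal properties defining $\op{MIN}$ and $\op{MAX}$, together with one concrete model of the former. I would fix the canonical embedding $\op{MIN}(X) \hookrightarrow C(K)$ with $K = (B_{X^{\ast}}, w^{\ast})$, giving the explicit matrix norm formula
\[\|[x_{ij}]\|_{M_n(\op{MIN}(X))} = \sup_{f \in B_{X^{\ast}}}\|[f(x_{ij})]\|_{M_n},\]
and use for $\op{MAX}(X)$ the description of $M_n(\op{MAX}(X))$ as carrying the largest operator space norm that extends the Banach norm of $X$, equivalently, the supremum of $\|[T(x_{ij})]\|$ over all contractions $T \colon X \to \op{B}(\HH)$.

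For the duality (i), the underlying Banach identification $(\op{MIN}(X))^{\ast} \cong X^{\ast}$ is isometric, so only the operator space structures must be matched. I would apply the trace-duality formula
\[\|[\varphi_{ij}]\|_{M_n(V^{\ast})} = \sup_{m,\;[v_{kl}] \in B_{M_m(V)}}\|[\varphi_{ij}(v_{kl})]\|_{M_{nm}}\]
to $V = \op{MIN}(X)$ and substitute the concrete MIN formula for $\|[v_{kl}]\|_{M_m(\op{MIN}(X))}$. The resulting iterated supremum should then be identified with the MAX matrix norm on $M_n(X^{\ast})$, for which I would use the standard factorisation description $[\varphi_{ij}] = \alpha\,\mathrm{diag}(f_1,\dots,f_r)\,\beta$ with $f_k \in B_{X^{\ast}}$ and scalar matrices $\alpha,\beta$. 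Statement (ii) follows either by the symmetric direct computation, or by biduality together with the fact that $\op{MIN}(X) \hookrightarrow \op{MIN}(X^{\ast\ast})$ is completely isometric; the ``in particular'' self-duality of minimality and maximality is then a formal consequence.

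Preservation of minimality under $\ell_{\infty}$-sums is transparent from the model: if $X_{\alpha} \hookrightarrow C(K_{\alpha})$ isometrically, then $\bigoplus_{\alpha}^{\ell_{\infty}} X_{\alpha} \hookrightarrow C\bigl(\bigsqcup_{\alpha} K_{\alpha}\bigr)$, again a commutative $C^{\ast}$-algebra. Preservation of maximality under $\ell_{1}$-sums then follows by dualising via (i) and (ii), since $\ell_{1}$- and $\ell_{\infty}$-sums of operator spaces are mutually dual. For the last two assertions: if $Y$ is a subspace of a minimal $X$ and $T \colon Z \to Y$ is a contraction, then $\iota \circ T \colon Z \to X$ is a contraction into a minimal space, hence completely contractive; since $\iota$ is a complete isometry, $T$ itself is completely contractive, so $Y$ is minimal. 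The quotient statement follows by the dual argument applied to the quotient map $q \colon X \to X/Y$, using that $q$ is a complete quotient: any contraction $T \colon X/Y \to Z$ lifts via $q$ to a completely contractive map on maximal $X$, and the complete-quotient property forces $T$ itself to be completely contractive. The main technical burden lies in the matrix-norm bookkeeping that matches $(\op{MIN}(X))^{\ast}$ with $\op{MAX}(X^{\ast})$; everything else reduces to one-line verifications from the universal properties.
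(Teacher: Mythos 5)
The paper offers no proof of this proposition at all -- it is quoted from Effros--Ruan \textsection 3.3 -- so the only question is whether your sketch would stand on its own. Most of it would: the easy halves of (i) and (ii) follow from the universal properties exactly as you say, the subspace and quotient statements are the one-line arguments you give (for quotients, note you do not ``lift'' $T$; you consider $T\circ q$, which is contractive hence completely contractive on the maximal space, and then use that $q$ is a complete quotient map), the $\ell_\infty$-sum argument via $C_b(\bigsqcup_\alpha K_\alpha)$ is fine, and the $\ell_1$-sum statement does dualise correctly once the ``in particular'' is available (which itself needs the bidual trick: if $X^\ast$ is minimal, pass to $u^{\ast\ast}\colon X^{\ast\ast}=\op{MAX}(X^{\ast\ast})\to \op{B}(\HH)^{\ast\ast}$ and restrict along the canonical complete isometries). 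Also, (ii) is more naturally proved directly -- the norm of $[\varphi_{ij}]$ in $\op{M}_n((\op{MAX}(X))^\ast)$ is the cb-norm of the map $X\to \op{M}_n$, which equals its norm by maximality, and Goldstine identifies this with the $\op{MIN}(X^\ast)$ norm -- rather than deduced from the harder (i).

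The genuine gap is in (i), precisely where you wave at ``matrix-norm bookkeeping''. The inequality $\|\cdot\|_{(\op{MIN}(X))^\ast}\leqslant\|\cdot\|_{\op{MAX}(X^\ast)}$ is indeed formal (the identity $\op{MAX}(X^\ast)\to(\op{MIN}(X))^\ast$ is contractive, hence completely contractive). But the reverse inequality -- that $[\varphi_{ij}]$ with dual-of-MIN norm $<1$ admits a factorisation $\alpha\,\op{diag}(f_1,\dots,f_r)\,\beta$ with $f_k\in B_{X^\ast}$ and $\|\alpha\|\,\|\beta\|<1$ -- does not fall out of substituting the MIN formula into the iterated supremum; it is the actual content of the theorem. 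The standard argument views $[\varphi_{ij}]$ as a map $\op{MIN}(X)\to\op{M}_n$ of cb-norm $<1$, extends it to $C(K)$, $K=(B_{X^\ast},w^\ast)$, by injectivity of $\op{M}_n$ (Arveson--Wittstock), and then uses the representation of completely contractive maps on a commutative $C^\ast$-algebra (Stinespring/Paulsen decomposition, approximated by finite sums of point evaluations at points of $K$, which are exactly elements of $B_{X^\ast}$) to produce the factorisation; alternatively one runs an Effros--Winkler type separation. Without one of these inputs your identification of the two matrix norms is an assertion, not a proof, so as written the central claim of the proposition remains unestablished.
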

We will also need one fact about the row Hilbert space $\mathcal{R}$.
\begin{stw}[cf. {\cite[Theorem 24.2]{Pi2}}]
Row Hilbert space $\mathcal{R}$ is an injective operator space, i.e.~for any pair of operator spaces $Y \subset X$ and a completely contractive operator $T:Y \to \mathcal{R}$ there exists a completely contractive extension $\widetilde{T}:X \to \mathcal{R}$:
\begin{figure}[h]
\centering
\begin{tikzpicture}
\matrix (m) [
			matrix of math nodes,
      row sep=3em,
      column sep=3em,
      text height=1.5ex,
      text depth=0.25ex
      ]
     {
     X & \mathcal{R}  \\
     Y &  \\
      };
      \path[right hook->] 	(m-2-1) edge (m-1-1); 
      \path[->]						(m-2-1) edge node[above] {$T$}       (m-1-2);
      \path[dashed,->]  (m-1-1) edge node[above] {$\widetilde{T}$} (m-1-2);      						

\end{tikzpicture}
\end{figure}
\end{stw}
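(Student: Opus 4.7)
The plan is to reduce the claim to the Arveson--Wittstock extension theorem for $\op{B}(\HH)$ together with the observation that $\mathcal{R}$ sits inside $\op{B}(\HH)$ as a corner cut out by a projection.

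First I would realise $\mathcal{R}$ concretely. Fix a Hilbert space $\HH$ with orthonormal basis $(e_i)_{i \geq 1}$, identify $\op{B}(\HH)$ with infinite matrices, and let $p = e_{11}$ be the rank-one projection onto $\CC e_1$. Then $\mathcal{R}$ is the row space $\op{span}\{e_{1j} : j \geq 1\} \subset \op{B}(\HH)$, and one checks directly that
\[
\mathcal{R} = p\,\op{B}(\HH),
\]
i.e.~an element $r \in \op{B}(\HH)$ belongs to $\mathcal{R}$ iff $pr = r$. This is the structural input that will let me project any extension back into $\mathcal{R}$.

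Next I would invoke the injectivity of $\op{B}(\HH)$ (Arveson--Wittstock / Wittstock's Hahn--Banach theorem): given a complete contraction $T:Y \to \mathcal{R} \subset \op{B}(\HH)$, regarded now as landing in $\op{B}(\HH)$, there is a completely contractive extension $\widetilde{S}:X \to \op{B}(\HH)$. Of course $\widetilde{S}$ need not take values in $\mathcal{R}$, so I define
\[
\widetilde{T}:X \to \mathcal{R}, \qquad \widetilde{T}(x) := p\,\widetilde{S}(x).
\]
By the description of $\mathcal{R}$ above, $\widetilde{T}$ actually maps into $\mathcal{R}$. For $y \in Y$ one has $T(y) \in \mathcal{R}$, so $p\,\widetilde{S}(y) = p\,T(y) = T(y)$, which gives the extension property.

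Finally, I would check complete contractivity: left multiplication by the projection $p$ is a complete contraction on $\op{B}(\HH)$ (it is $m_p : a \mapsto pa$ with $\|m_p\|_{cb} \leq \|p\| \leq 1$), so
\[
\|\widetilde{T}\|_{cb} \leq \|m_p\|_{cb}\,\|\widetilde{S}\|_{cb} \leq 1.
\]
The main conceptual point is identifying $\mathcal{R}$ as a corner $p\op{B}(\HH)$; once this is in place, everything else is a one-line composition with Arveson--Wittstock. The only possible obstacle is a reader who has not yet seen the Arveson--Wittstock theorem, but since we are taking it (and the canonical matrix realisation of $\mathcal{R}$) as standard background from \cite{Pi2}, there is no real difficulty.
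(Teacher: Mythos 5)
Your argument is correct: the identification $\mathcal{R}=e_{11}\op{B}(\HH)$, the Arveson--Wittstock extension into $\op{B}(\HH)$, and the completely contractive compression $a\mapsto pa$ back into the corner together give the required extension. The paper does not prove this proposition but simply cites \cite[Theorem 24.2]{Pi2}, and your corner-plus-Wittstock argument is exactly the standard proof behind that reference, so there is nothing to add.
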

Now we head to the main part of this paper.
\section{Amalgamated direct sums and fibre products}\label{amalg}
Let us first recall that the $\ell_p$-direct sums of Banach spaces (denoted by $\oplus_{p}$) have their counterparts also in the world of operator spaces, cf. \cite[\textsection 2.6 and Remark 2.7.3]{Pi2}.

We are now ready for the main definition.
\begin{defin}
Let $X$ and $Y$ be two operator spaces with a common subspace $Z$ (it means that there are complete isometries $Z \hookrightarrow X$ and $Z \hookrightarrow Y$). Then we may define the \textbf{amalgamated $\ell_{p}$-direct sum} of $X$ and $Y$ over $Z$ as
$$
(X \oplus_{Z} Y)_{p} := X \oplus_{p} Y \slash \overline{\op{span}}\{(z,-z): z \in Z\}.
$$
Dually, let $X$ and $Y$ be operator spaces equipped with complete quotient maps $\pi_1:X \to Z$ and $\pi_2:Y \to Z$. Then we define the \textbf{fibre $\ell_{p}$-product} of $X$ and $Y$ over $Z$ as
$$
(X \times_{Z} Y)_{p} := \{(x,y) \in X\oplus_{p} Y: \pi_1(x)=\pi_2(y)\}.
$$
\end{defin}
Let us start with proving basic properties of these constructions.
\begin{stw}
Amalgamated direct sums and fibre products are dual to each other:
\begin{enumerate}[{\normalfont (i)}]
\item\label{bla1} If $Z$ is a common subspace of operator spaces $X$ and $Y$ then $Z^{\ast}$ is naturally a quotient of both $X^{\ast}$ and $Y^{\ast}$. Moreover $\left((X \oplus_{Z} Y)_{p}\right)^{\ast} \simeq (X^{\ast} \times_{Z^{\ast}} Y^{\ast})_{p'}$, where $p'=\frac{p}{p-1}$.
\item\label{bla2} If $Z$ is a quotient of both $X$ and $Y$ then $Z^{\ast}$ is a common subspace of $X^{\ast}$ and $Y^{\ast}$. Moreover $\left( (X \times_{Z} Y)_{p}\right)^{\ast} \simeq (X^{\ast} \oplus_{Z^{\ast}} Y^{\ast})_{p'}$, where $p'=\frac{p}{p-1}$.
\end{enumerate}
\end{stw}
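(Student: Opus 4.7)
The plan is to reduce both parts to two standard duality facts for operator spaces: first, the completely isometric identification $(X \oplus_p Y)^{\ast} \cong X^{\ast} \oplus_{p'} Y^{\ast}$ (cf.~Pisier's book, referenced earlier in the excerpt); second, for any operator space $E$ with closed subspace $F$, the completely isometric identifications $(E/F)^{\ast} \cong F^{\perp}$ and $F^{\ast} \cong E^{\ast}/F^{\perp}$. Once these are in hand, each statement amounts to identifying the annihilator of one concretely described subspace with the other construction.

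For \eqref{bla1}, I would first note that since $Z \hookrightarrow X$ is a complete isometry, the restriction map $r_{X} \colon X^{\ast} \to Z^{\ast}$ is a complete quotient (and similarly $r_{Y}$); this is exactly what is needed to make sense of the right-hand side. Then, writing $N := \overline{\op{span}}\{(z,-z): z \in Z\} \subset X \oplus_{p} Y$, the definition gives $(X \oplus_{Z} Y)_{p} = (X \oplus_{p} Y)/N$, so
$$
\bigl((X \oplus_{Z} Y)_{p}\bigr)^{\ast} \cong N^{\perp} \subset (X \oplus_{p} Y)^{\ast} \cong X^{\ast} \oplus_{p'} Y^{\ast}.
$$
A pair $(\varphi, \psi)$ lies in $N^{\perp}$ precisely when $\varphi(z) = \psi(z)$ for all $z \in Z$, i.e.~when $r_{X}(\varphi) = r_{Y}(\psi)$, which is exactly the definition of the fibre product $(X^{\ast} \times_{Z^{\ast}} Y^{\ast})_{p'}$. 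The matrix levels go through because each of the identifications above is already completely isometric, and the annihilator condition is defined entry-wise.

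For \eqref{bla2}, since $\pi_{1} \colon X \to Z$ and $\pi_{2} \colon Y \to Z$ are complete quotient maps, their adjoints $\pi_{1}^{\ast}, \pi_{2}^{\ast}$ are complete isometries of $Z^{\ast}$ into $X^{\ast}$ and $Y^{\ast}$. Writing $F := (X \times_{Z} Y)_{p}$, this is a closed subspace of $X \oplus_{p} Y$, so
$$
F^{\ast} \cong (X^{\ast} \oplus_{p'} Y^{\ast})/F^{\perp}.
$$
The task is to identify $F^{\perp}$ with $M := \overline{\op{span}}\{(\pi_{1}^{\ast}(z^{\ast}), -\pi_{2}^{\ast}(z^{\ast})): z^{\ast} \in Z^{\ast}\}$. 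The inclusion $M \subset F^{\perp}$ is immediate: for $(x,y) \in F$ we have $\pi_{1}(x) = \pi_{2}(y)$, so $\langle \pi_{1}^{\ast}(z^{\ast}), x\rangle - \langle \pi_{2}^{\ast}(z^{\ast}), y\rangle = z^{\ast}(\pi_{1}(x) - \pi_{2}(y)) = 0$.

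The main obstacle is the reverse inclusion, $F^{\perp} \subset M$, and in particular verifying it at every matrix level. I would handle it by a Hahn–Banach-type argument: given $(\varphi, \psi) \in (X^{\ast} \oplus_{p'} Y^{\ast}) \setminus M$, use the Hahn–Banach separation theorem in $(X \oplus_{p} Y)^{\ast\ast}$ (or, equivalently, compute $M^{\perp} \subset X^{\ast\ast} \oplus_{p} Y^{\ast\ast}$ and observe that $M^{\perp} \cap (X \oplus_{p} Y)$ equals $F$ because $\pi_{1}^{\ast\ast}, \pi_{2}^{\ast\ast}$ agree with $\pi_{1}, \pi_{2}$ on $X, Y$). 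Once this has been established isometrically, the completely isometric version follows by applying the same argument to $\op{M}_{n}(X)$ and $\op{M}_{n}(Y)$ with the induced structures, using that the constructions $\oplus_{p}$, subspace, quotient and dual all commute with passage to $\op{M}_{n}$.
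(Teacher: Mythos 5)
Part (i) of your proposal is essentially the paper's own argument: dualise the quotient to the annihilator of $\overline{\op{span}}\{(z,-z): z\in Z\}$ inside $X^{\ast}\oplus_{p'}Y^{\ast}$ and compute that annihilator to be the fibre product; nothing to add there.

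For part (ii) there is a gap exactly at the step you yourself flag as the main obstacle. Write $F:=(X\times_Z Y)_p$ and $M:=\overline{\op{span}}\{(\pi_1^{\ast}z^{\ast},-\pi_2^{\ast}z^{\ast}): z^{\ast}\in Z^{\ast}\}$. Your Hahn--Banach separation, applied to $(\varphi,\psi)\notin M$, produces a functional $\Phi\in X^{\ast\ast}\oplus_{p}Y^{\ast\ast}$ annihilating $M$ with $\Phi(\varphi,\psi)\neq 0$; but to conclude $(\varphi,\psi)\notin F^{\perp}$ you need a witness inside $F\subset X\oplus_p Y$, and $\Phi$ has no reason to lie in $X\oplus_p Y$. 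Equivalently, your observation that $M^{\perp}\cap(X\oplus_p Y)=F$, i.e.\ that the pre-annihilator $M_{\perp}$ equals $F$, only yields $F^{\perp}=(M_{\perp})^{\perp}=\overline{M}^{\,\mathrm{w}^{\ast}}$; to get $F^{\perp}\subset M$ you must additionally know that $M$ is weak$^{\ast}$-closed, which is not automatic for a norm-closed subspace of a dual space and which your sketch never establishes. The gap is easy to fill: either note that $M=\op{Ran}(U^{\ast})$ for the metric surjection $U:X\oplus_p Y\to Z$, $U(x,y)=\pi_1(x)-\pi_2(y)$, so that $U^{\ast}$ is a weak$^{\ast}$-continuous isometry and $(\ker U)^{\perp}=\op{Ran}(U^{\ast})=M$ (Krein-\v{S}mulian or the closed range theorem); or argue directly, in the spirit of the computation in (i): if $(\varphi,\psi)\in F^{\perp}$, testing against $(x,0)$ with $\pi_1(x)=0$ and $(0,y)$ with $\pi_2(y)=0$ shows that $\varphi$ and $\psi$ vanish on $\ker\pi_1$ and $\ker\pi_2$, hence $\varphi=\pi_1^{\ast}v^{\ast}$ and $\psi=\pi_2^{\ast}w^{\ast}$; testing against arbitrary $(x,y)\in F$ and using surjectivity of $\pi_1$ gives $w^{\ast}=-v^{\ast}$, so in fact $F^{\perp}=M$ with no closure issues. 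Once this set equality is in place, your final remark is correct: the completely isometric statement follows from the standard dualities $(E/F)^{\ast}\cong F^{\perp}$, $F^{\ast}\cong E^{\ast}/F^{\perp}$ and $(X\oplus_p Y)^{\ast}\cong X^{\ast}\oplus_{p'}Y^{\ast}$, which is precisely how the paper proceeds (its proof of (ii) is simply ``along the lines of (i)'').
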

\begin{proof}
\eqref{bla1} The first part follows from the fact that if $T:X \to Y$ is a complete isometry then $T^{\ast}:Y^{\ast} \to X^{\ast}$ is a complete quotient map (cf. \cite[\textsection 2.4]{Pi2}). For the second part, we use the duality between quotients and subspaces, namely 
$$
\left(X \oplus_{p} Y \slash \overline{\op{span}}\{(z,-z):z \in Z\}\right)^{\ast} \simeq (\overline{\op{span}}\{(z,-z):z \in Z\})^{\perp} \subset X^{\ast} \oplus_{p'} Y^{\ast}. 
$$
The proof boils down to computation of $(\overline{\op{span}}\{(z,-z):z \in Z\})^{\perp}$. If $(\varphi, \psi) \in X^{\ast} \oplus_{p'} Y^{\ast}$ annihilates all elements of the form $(z,-z)$ then $\varphi_{|Z} = \psi_{|Z}$, which means exactly that $(\varphi, \psi)$ belongs to $(X^{\ast} \times_{Z^{\ast}} Y^{\ast})_{p'}$.

\eqref{bla2} The proof goes exactly along the lines of the above proof of \eqref{bla1}.
\end{proof}
The main aim of this paper is to investigate the behaviour of amalgamated direct sums and fibre products under minimal and maximal quantisations. Let us first deal with easy cases.
\begin{stw}
Let $X$, $Y$, $Z$ be operator spaces.
\begin{enumerate}[{\normalfont (i)}]
\item\label{stw1}
If $Z$ is a quotient of both $X$ and $Y$, with $X$ and $Y$ minimal, then the fibre $\ell_{\infty}$-product $(X \times_Z Y)_{\infty}$ is minimal as well.
\item\label{stw2} Suppose that $X$ and $Y$ are maximal operator spaces with a common subspace $Z$. Then their amalgamated $\ell_{1}$-direct sum is maximal.
\end{enumerate}
\end{stw}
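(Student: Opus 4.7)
The plan for both statements is to reduce directly to the stability properties of minimal and maximal quantisations already listed in Section \ref{prelim}, since the constructions unravel to a subspace and a quotient, respectively.

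For \eqref{stw1}, I would observe that $(X \times_{Z} Y)_{\infty}$ sits as a closed subspace of $X \oplus_{\infty} Y$: by definition the fibre product is carved out by the condition $\pi_1(x) = \pi_2(y)$ (equivalently, it is the kernel of the continuous map $(x,y) \mapsto \pi_1(x)-\pi_2(y)$), and it is equipped with the inherited operator space structure. Since minimality is preserved under $\ell_{\infty}$-direct sums, $X \oplus_{\infty} Y$ is minimal, and since minimality passes to subspaces, so is $(X \times_{Z} Y)_{\infty}$.

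For \eqref{stw2}, the dual observation applies: $(X \oplus_{Z} Y)_{1}$ is by construction a quotient of $X \oplus_{1} Y$ by the closed subspace $\overline{\op{span}}\{(z,-z): z \in Z\}$. Because maximality is preserved under $\ell_{1}$-direct sums, $X \oplus_{1} Y$ is maximal, and because maximality descends to quotients, the amalgamated $\ell_{1}$-direct sum is maximal. If one prefers to avoid invoking two of the stability facts at once, one can instead deduce \eqref{stw2} from \eqref{stw1} via the duality isomorphism $((X \oplus_{Z} Y)_{1})^{\ast} \simeq (X^{\ast} \times_{Z^{\ast}} Y^{\ast})_{\infty}$ from the previous proposition: the duals $X^{\ast}, Y^{\ast}$ are minimal, so by \eqref{stw1} the fibre product on the right is minimal, whence its predual $(X \oplus_{Z} Y)_{1}$ is maximal.

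There is essentially no obstacle beyond matching the definitions; the content of the proposition is really just the verification that the fibre $\ell_{\infty}$-product inherits its operator space structure as a subspace and the amalgamated $\ell_{1}$-direct sum inherits it as a quotient. The substantive work in the paper evidently lies in the complementary direction addressed by Theorem \ref{main}, where one shows that the analogous statements fail for the amalgamated $\ell_{\infty}$-direct sum of two minimal spaces; constructing such counterexamples is where the effort is spent in Section \ref{mainproof}.
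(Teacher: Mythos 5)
Your proposal is correct and matches the paper's argument: part \eqref{stw1} is proved exactly as in the paper (the fibre product is a subspace of the minimal space $X \oplus_{\infty} Y$), and your duality route for \eqref{stw2} via $((X \oplus_{Z} Y)_{1})^{\ast} \simeq (X^{\ast} \times_{Z^{\ast}} Y^{\ast})_{\infty}$ is precisely the paper's proof. Your alternative direct argument for \eqref{stw2} (maximality is preserved by $\ell_{1}$-sums and descends to quotients) is an equally valid minor variant relying on the same stability facts from the preliminaries.
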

\begin{proof}
\eqref{stw1} If $Z$ is a quotient of both $X$ and $Y$, with $X$ and $Y$ minimal, then the fibre $\ell_{\infty}$-product $(X \times_Z Y)_{\infty}$ is, by definition, a subspace of a minimal space $X \oplus_{\infty} Y$, so it is also minimal.

\eqref{stw2} If $X$ and $Y$ are maximal then, by the preceding proposition, the dual of $(X \oplus_{Z} Y)_{1}$ is $(X^{\ast} \times_{Z^{\ast}} Y^{\ast})_{\infty}$, which is minimal by the first part of the proof.
\end{proof}
Later (in Section \ref{mainproof}) we will provide examples of amalgamated $\ell_{\infty}$-direct sums of minimal spaces that are not minimal themselves. The main tool will be the following observation.
\begin{stw}\label{amalgfib}

\begin{enumerate}[{\normalfont (i)}]
\item\label{nonminam}
Suppose that a Banach space $X$ has the following property: whenever $X \subset L^{\infty}$ then the quotient $\op{MIN}(L^{\infty}) \slash X$ is not completely isomorphic to a minimal operator space.

Then, if $X \subset L^{\infty}(\Omega, \mu)$ and $X \subset L^{\infty}(\Omega', \mu')$ then the amalgamated $\ell_{\infty}$-direct sum $\left(\op{MIN}(L^{\infty}(\Omega, \mu)) \oplus_{X} \op{MIN}(L^{\infty}(\Omega', \mu'))\right)_{\infty}$ is not completely isomorphic to a minimal operator space. 
\item\label{nonmaxfi}
Suppose that a Banach space $X$ has the following property: whenever there is a quotient map $T: L^{1} \to X$ then $\op{ker}T \subset \op{MAX}(L^{1})$ is not completely isomorphic to a maximal operator space.

Then, if there are quotient maps $S: L^{1}(\Omega, \mu) \to X$ and $T:L^{1}(\Omega', \mu') \to X$ then the fibre $\ell_1$-product $\left(\op{MAX}(L^{1}(\Omega,\mu)) \times_{X} \op{MAX}(L^{1}(\Omega', \mu'))\right)_{1}$ is not completely isomorphic to a maximal operator space.
\end{enumerate}
\end{stw}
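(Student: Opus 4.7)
I would approach this by \emph{collapsing} each amalgamated/fibre construction into a single quotient or subspace of a $\op{MIN}(L^\infty)$ or $\op{MAX}(L^1)$ space, and then applying the hypothesis on $X$ directly. The crucial elementary observation is that on the Banach side $L^\infty(\Omega,\mu)\oplus_\infty L^\infty(\Omega',\mu')=L^\infty(\Omega\sqcup\Omega')$ and $L^1(\Omega,\mu)\oplus_1 L^1(\Omega',\mu')=L^1(\Omega\sqcup\Omega')$, and these identifications persist after quantising, because by the preliminaries $\op{MIN}$ commutes with $\oplus_\infty$ and $\op{MAX}$ commutes with $\oplus_1$.

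For \eqref{nonminam}, writing $L^\infty_3:=L^\infty(\Omega\sqcup\Omega')$, I get the completely isometric identification $\op{MIN}(L^\infty(\Omega,\mu))\oplus_\infty \op{MIN}(L^\infty(\Omega',\mu'))=\op{MIN}(L^\infty_3)$. By the very definition of the amalgamated direct sum, the space in question is then the operator-space quotient $\op{MIN}(L^\infty_3)/W$, where $W:=\overline{\op{span}}\{(x,-x):x\in X\}$. The map $x\mapsto(x,-x)$ realises $X$ isometrically as a Banach subspace of $L^\infty_3$, because the $\ell_\infty$-norm of $(x,-x)$ equals $\|x\|_X$. Hence the amalgamated sum is a quotient of $\op{MIN}(L^\infty_3)$ by an isometric copy of $X\subset L^\infty_3$, which by hypothesis is not completely isomorphic to a minimal operator space.

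For \eqref{nonmaxfi} I would run the dual argument: set $L^1_3:=L^1(\Omega\sqcup\Omega')$, so that $\op{MAX}(L^1(\Omega,\mu))\oplus_1 \op{MAX}(L^1(\Omega',\mu'))=\op{MAX}(L^1_3)$, and define $\Phi:\op{MAX}(L^1_3)\to X$ by $\Phi(a,b):=S(a)-T(b)$. Then $\ker\Phi$ coincides by definition with the fibre product. The one small verification is that $\Phi$ is a complete quotient map: since the canonical inclusion of a summand $\op{MAX}(L^1(\Omega,\mu))\hookrightarrow \op{MAX}(L^1_3)$ is a complete isometry and $S$ is already a complete quotient, any $[x_{ij}]\in M_n(X)$ with $\|[x_{ij}]\|<1$ lifts through $S$ to some $[a_{ij}]$ of norm $<1$, and then $([a_{ij}],0)$ has the same norm and maps to $[x_{ij}]$ under $\Phi$. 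The hypothesis on $X$ then applies to $\Phi$ and yields that $\ker\Phi$, with its induced structure as a subspace of $\op{MAX}(L^1_3)$, is not completely isomorphic to a maximal operator space.

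There is no real obstacle here; the content of the proposition is essentially this collapsing observation together with correctly tracking which operator-space structures get put on the pieces. The only place where one must be slightly careful is in (ii), namely checking that $\oplus_1$ of MAX spaces is again MAX, that the canonical inclusion of a summand in an $\ell_1$-sum is a complete isometry, and that $S$ being a complete quotient is enough to make $\Phi$ one as well. Once these routine facts are in hand, both statements reduce immediately to the stated hypotheses on $X$.
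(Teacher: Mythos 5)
Your proof is correct and follows essentially the same route as the paper: collapse the $\ell_\infty$- (resp.\ $\ell_1$-) sum into a single $L^\infty(\Omega\sqcup\Omega')$ (resp.\ $L^1(\Omega\sqcup\Omega')$), identify the amalgamated sum with the quotient by the isometric anti-diagonal copy of $X$ and the fibre product with $\ker(S-T)$ where $S-T$ is a quotient map onto $X$, then invoke the hypothesis. The only (harmless) difference is that you verify complete quotienthood of $\Phi$, whereas the paper only needs and only checks the Banach-level quotient property, which is all the hypothesis on $X$ requires.
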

\begin{proof}
\eqref{nonminam}
The subspace $\{(x,-x): x \in X\} \subset L^{\infty}(\Omega, \mu) \oplus_{\infty} L^{\infty}(\Omega', \mu')$ is isometric to $X$ and $L^{\infty}(\Omega, \mu) \oplus_{\infty} L^{\infty}(\Omega', \mu') \cong L^{\infty}(\Omega \sqcup \Omega', \mu \sqcup \mu')$, so the amalgamated $\ell_{\infty}$-direct sum $\left(\op{MIN}(L^{\infty}(\Omega, \mu)) \oplus_{X} \op{MIN}(L^{\infty}(\Omega', \mu'))\right)_{\infty}$ is completely isometric to a quotient of an $L^{\infty}$-space by an isometric copy of $X$.

\eqref{nonmaxfi} 
By definition, the fibre $\ell_1$-product $\left(\op{MAX}(L^{1}(\Omega,\mu)) \times_{X} \op{MAX}(L^{1}(\Omega', \mu'))\right)_{1}$ is equal to the kernel of the map $U: L^{1}(\Omega, \mu) \oplus_1 L^1(\Omega', \mu') \to X$, given by $U(f,g) = S(f) - T(g)$. Since $S$ is a quotient map, for every $x \in X$ there exists an $f \in L^{1}(\Omega,\mu)$ such that $\|f\| \leqslant \|x\|(1+\varepsilon)$ and $U(f,0) = x$, therefore $U$ is a quotient map. Since $L^{1}(\Omega, \mu) \oplus_1 L^1(\Omega', \mu') \cong L^{1}(\Omega \sqcup \Omega', \mu \sqcup \mu')$, $\op{ker}U$ is not completely isomorphic to a maximal operator space; the result follows.
\end{proof}
Since, by definition, amalgamated $\ell_{\infty}$-direct sums of $L^{\infty}$-spaces are special quotients of minimal operator spaces, we will start with exhibiting a new example of a non-minimal quotient, which will come in handy during the proof of the main theorem.
\section{Quotients/subspaces of minimal/maximal operator spaces}
To make our work meaningful, we should first ensure that there exist quotients of minimal spaces that are not minimal. This is, by duality, equivalent to producing an example of a subspace of a maximal operator space that is not maximal, and this is what we want to recall.
\begin{tw}[Lust-Piquard, Pisier, \cite{LPP}]
Let $(r_n)_{n \in \NN}$ be the Rademacher sequence. The subspace $\overline{\op{span}}(r_n)_{n \in \NN} \subset \op{MAX}(L^{1})$ is completely isomorphic to the Hilbertian operator space $\mathcal{R} + \mathcal{C}$ (for the definition, see \cite[\textsection 9.8]{Pi2}), which is not completely isomorphic to a maximal operator space.
\end{tw}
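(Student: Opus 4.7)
The plan is to prove this in three stages: (i) identify the Banach-space structure of $R := \overline{\op{span}}(r_n)$; (ii) compute its inherited operator-space structure inside $\op{MAX}(L^1)$ and match it with $\mathcal{R}+\mathcal{C}$; and (iii) show that $\mathcal{R}+\mathcal{C}$ is not completely isomorphic to any maximal operator space. Stage (i) is immediate from the classical Khintchine inequality $\|\sum a_n r_n\|_{L^1} \asymp (\sum |a_n|^2)^{1/2}$, which exhibits $R$ as Banach-isomorphic to $\ell_2$.

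For Stage (ii), I would work dually. By the duality between subspaces and quotients, $R^{\ast}$ is completely isomorphic to $\op{MIN}(L^\infty) / R^\perp$, and the target is to identify this quotient with $\mathcal{R} \cap \mathcal{C}$; the desired conclusion $R \cong \mathcal{R}+\mathcal{C}$ then follows by dualising again, using $(\mathcal{R}\cap\mathcal{C})^{\ast} \cong \mathcal{R}+\mathcal{C}$. To compute the relevant matrix norms, I would take $(\varphi_{ij}) \in M_n(R^{\ast})$, lift each $\varphi_{ij}$ by Hahn-Banach to an element of $L^1 \cong (L^\infty)^{\ast}$, and apply Grothendieck's theorem (as stated in the preliminaries) to the resulting Hilbert-space-valued operator on $L^1$. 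The absolutely-summing factorisation then forces matrix norms of the form $\max(\|(\sum a_{ij}^{\ast} a_{ij})^{1/2}\|, \|(\sum a_{ij} a_{ij}^{\ast})^{1/2}\|)$, which are precisely the matrix norms of $\mathcal{R}\cap\mathcal{C}$. This stage is the main obstacle: the heuristic is clear (Grothendieck forces all the action to happen through Hilbert space, which is what selects the row/column structure), but matching the precise constants is the deep technical content of the noncommutative Khintchine inequality of Lust-Piquard and Pisier.

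Stage (iii) proceeds by contradiction. Since $\mathcal{R}+\mathcal{C}$ is Banach-isometric to $\ell_2$, and $\op{MAX}$ sends Banach isomorphisms to complete isomorphisms, if $\mathcal{R}+\mathcal{C}$ were completely isomorphic to any maximal operator space, it would be completely isomorphic to $\op{MAX}(\ell_2)$, with uniform cb constants when restricted to finite-dimensional subspaces. Composing with the completely contractive identity $\mathcal{R}+\mathcal{C} \to \mathcal{R}$ would then give $\|I: \op{MAX}(\ell_2^n) \to \mathcal{R}_n\|_{cb} \leq C$ uniformly in $n$. By the duality $\op{MAX}^{\ast} = \op{MIN}$ together with $\mathcal{R}^{\ast} = \mathcal{C}$, this cb norm equals $\|I: \mathcal{C}_n \to \op{MIN}(\ell_2^n)\|_{cb}$. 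Testing on the matrix family $\{e_{i1}\}_{i=1}^{n} \subset M_n$ gives column norm $\sqrt{n}$ (since $\sum_i e_{1i}e_{i1} = n e_{11}$) and minimal norm $1$ (since $\sup_{\|\alpha\|_2 \leq 1}\|\sum \alpha_i e_{i1}\|_{M_n} = 1$), so $\|I: \mathcal{C}_n \to \op{MIN}(\ell_2^n)\|_{cb} \geq \sqrt{n}$. This unbounded growth contradicts the uniform bound and completes the argument.
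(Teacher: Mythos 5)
Your Stage (iii) contains a genuine error. The formal identity $\mathcal{R}+\mathcal{C} \to \mathcal{R}$ is not completely contractive -- it is not even completely bounded: for a sum of operator spaces only the inclusions $\mathcal{R} \to \mathcal{R}+\mathcal{C}$ and $\mathcal{C} \to \mathcal{R}+\mathcal{C}$ are complete contractions, while a map out of the sum is c.b.\ only if its restrictions to both pieces are, and the restriction here is $\op{id}:\mathcal{C}\to\mathcal{R}$, whose c.b.\ norm on the $n$-dimensional pieces is $\sqrt{n}$. Moreover the quantity you end up trying to contradict, $\|I:\op{MAX}(\ell_2^n)\to\mathcal{R}_n\|_{cb}$, equals $1$ for trivial reasons: every bounded map out of a maximal space is completely bounded with the same norm, and dually $\|I:\mathcal{C}_n\to\op{MIN}(\ell_2^n)\|_{cb}=1$ because every contraction into a minimal space is a complete contraction. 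Your test element $\sum_i e_{i1}\otimes e_{i1}$ in fact has $\mathcal{C}_n$-norm $\sqrt{n}$ and $\op{MIN}$-norm $1$, i.e.\ the large norm is on the domain side, so it yields no lower bound at all; the claimed inequality $\|I:\mathcal{C}_n\to\op{MIN}(\ell_2^n)\|_{cb}\geq\sqrt{n}$ is false. The argument can be repaired by running the automatic-c.b.\ property in the other direction: if $\mathcal{R}+\mathcal{C}$ were completely isomorphic to a maximal space, then the identity $\mathcal{R}+\mathcal{C}\to\mathcal{C}$ (a contraction at the Banach level, since all three norms coincide with the $\ell_2$-norm at the first matrix level) would be completely bounded; precomposing with the complete contraction $\mathcal{R}\to\mathcal{R}+\mathcal{C}$ would make $\op{id}:\mathcal{R}\to\mathcal{C}$ completely bounded, contradicting $\|\op{id}:\mathcal{R}_n\to\mathcal{C}_n\|_{cb}=\sqrt{n}$.

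Concerning Stages (i)--(ii): you concede that the identification of the operator space structure induced on $\overline{\op{span}}(r_n)\subset\op{MAX}(L^1)$ with $\mathcal{R}+\mathcal{C}$ is ``the deep technical content of the noncommutative Khintchine inequality''; as written this is a citation rather than a proof -- the scalar Grothendieck theorem plus Hahn--Banach liftings of matrix entries does not produce the matricial estimate, which genuinely requires the operator-valued inequality of Lust-Piquard and Pisier. This is, however, consistent with the paper itself, which states the theorem as a quoted result of \cite{LPP} and gives no proof. Note also that if one only wants non-maximality of the Rademacher span (rather than its identification with $\mathcal{R}+\mathcal{C}$), the paper obtains it more softly from Theorem \ref{nonmax} together with the Khintchine inequality, since any infinite-dimensional Hilbertian subspace of $\op{MAX}(L^1)$ fails to be maximal; your Stage (iii), once corrected as above, addresses the stronger assertion that the concrete space $\mathcal{R}+\mathcal{C}$ is not completely isomorphic to any maximal operator space.
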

We will now produce numerous examples of subspaces of $\op{MAX}(L^1)$ that are not maximal, using a theorem of Kalton and Pe\l czy\'{n}ski from Banach space theory. Let us first formulate the result.
\begin{tw}\label{nonmax}
Let $X$ be a subspace of $\op{MAX}(L^1)$ that is not a GT-space (i.e.~there exists an operator $S:X \to \ell_2$ that is not absolutely summing). Then $X$ is not maximal.
\end{tw}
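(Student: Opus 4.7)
The plan is to prove the contrapositive: if $X \subset \op{MAX}(L^{1})$ is completely isomorphic to a maximal operator space, then every bounded linear map $S : X \to \ell_{2}$ must be absolutely summing. This rules out the existence of a non-absolutely-summing $S : X \to \ell_{2}$ whenever $X$ is maximal, which is exactly what the theorem claims.

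First, I would equip $\ell_{2}$ with the row Hilbert space structure $\mathcal{R}$ and view $S$ as a bounded map $S : X \to \mathcal{R}$. Because $X$ is (completely isomorphic to) a maximal operator space, any bounded linear map out of $X$ into an arbitrary operator space is automatically completely bounded, with $\|S\|_{cb} \leqslant K \|S\|$, where $K$ depends only on the cb-distance between $X$ and $\op{MAX}(X)$. Thus $S : X \to \mathcal{R}$ is completely bounded.

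Next, invoking the injectivity of $\mathcal{R}$ recalled in the preliminaries, I would extend $S$ to a completely bounded map $\widetilde{S} : \op{MAX}(L^{1}) \to \mathcal{R}$. In particular, $\widetilde{S}$ is a bounded operator from the Banach space $L^{1}$ into the Hilbert space $\ell_{2}$, so Grothendieck's theorem guarantees that $\widetilde{S}$ is absolutely summing. Since the restriction of an absolutely summing operator to a subspace remains absolutely summing, $S = \widetilde{S}_{|X}$ is absolutely summing, contradicting the hypothesis that $X$ is not a GT-space.

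The only mildly subtle point is the first step: transferring from $\op{MAX}(X)$ to $X$ when only a complete isomorphism (rather than a complete isometry) is available. This costs a multiplicative constant but does not affect the argument, since absolute summability is preserved under composition with bounded operators. Everything else is a direct combination of injectivity of $\mathcal{R}$ and Grothendieck's theorem, so no genuine obstacle arises.
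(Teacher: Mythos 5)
Your proof is correct and follows essentially the same route as the paper: use maximality of $X$ to make $S:X\to\mathcal{R}$ completely bounded, extend by injectivity of $\mathcal{R}$, and apply Grothendieck's theorem to the extension $\widetilde{S}:\op{MAX}(L^1)\to\ell_2$ to contradict the non-summability of $S$. Your remark about the multiplicative constant coming from a complete isomorphism (rather than isometry) is a minor point the paper leaves implicit, but it does not change the argument.
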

\begin{proof}
Suppose that $X$ is completely isomorphic to a maximal operator space and let $S: X \to \ell_2$ be an operator that is not absolutely summing. By maximality, $S: X \to \mathcal{R}$ is completely bounded. Since we know that $\mathcal{R}$ is an injective operator space, there exists a completely bounded extension $\widetilde{S}:\op{MAX}(L^1) \to \ell_2$, which is absolutely summing by Grothendieck's theorem. It follows that its restriction to $X$, which is equal to $S$, is also absolutely summing, hence we arrive at a contradiction.
\end{proof}
This simple theorem will serve as a basis for obtaining non-maximal subspaces of $\op{MAX}(L^1)$. Let us start with reproving known results, using this technique.
\begin{stw}
The Hardy space $H^{1}(\TT) \subset \op{MAX}(L^1(\TT))$ is not a maximal operator space. Dually, the quotient $\op{MIN}(L^{\infty}(\TT)) \slash H^{\infty}(\TT)$ is not minimal (identified in \cite[\textsection 9.1]{Pi2} with the space of Hankel operators). Any Hilbertian (infinite dimensional) subspace of $\op{MAX}(L^1)$ is not maximal.
\end{stw}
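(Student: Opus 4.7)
The plan is to reduce everything to Theorem~\ref{nonmax}: in each of the two \emph{not maximal} claims, I would exhibit a bounded but not absolutely summing operator into $\ell_2$, so that the subspace in question fails to be a GT-space. The quotient statement will then follow from the $\op{MIN}$/$\op{MAX}$ duality already recorded in Section~\ref{prelim}.

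For an infinite-dimensional Hilbertian subspace $X \subset \op{MAX}(L^1)$, pick any Banach-space isomorphism $T\colon X \to \ell_2$. If $T$ were absolutely summing, composing with $T^{-1}$ would yield an absolutely summing identity on $\ell_2$, which is impossible (the identity on an infinite-dimensional Hilbert space is not even compact). Hence $T$ witnesses that $X$ is not a GT-space, and Theorem~\ref{nonmax} applies.

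For $H^1(\TT)$, I would invoke Paley's theorem: the lacunary Fourier projection $P\colon H^1(\TT) \to \ell_2$, $f \mapsto (\hat f(2^n))_{n \geq 0}$, is bounded. The companion lacunary embedding $J\colon \ell_2 \to H^1(\TT)$ sending $e_n$ to a normalised multiple of $z^{2^n}$ is also bounded, and $P \circ J$ is (up to scalars) the identity on $\ell_2$. If $P$ were absolutely summing, so would be $P \circ J$, contradicting the previous paragraph. Thus $H^1(\TT)$ is not a GT-space, and Theorem~\ref{nonmax} gives the first assertion.

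For the quotient, I would simply dualise: the identification $(\op{MAX}(L^1(\TT)))^{\ast} \cong \op{MIN}(L^\infty(\TT))$ combined with the fact that subspaces dualise to quotients by annihilators gives $(H^1(\TT))^{\ast} \cong \op{MIN}(L^\infty(\TT)) / H^{1}(\TT)^{\perp}$ completely isometrically, and under the pairing convention adopted one has $H^{1}(\TT)^{\perp} = H^{\infty}(\TT)$. Since $H^1(\TT)$ is not completely isomorphic to a maximal operator space, its dual is not completely isomorphic to a minimal one. The main obstacle is not really in the operator-space machinery --- which is entirely packaged in Theorem~\ref{nonmax} and the min/max duality --- but in quoting Paley's inequality in a form that exhibits the lacunary subspace as complemented in $H^1(\TT)$ via $P$.
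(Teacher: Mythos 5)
Your proposal is correct and follows essentially the paper's route: both arguments reduce the two non-maximality claims to Theorem \ref{nonmax} by exhibiting a bounded operator into $\ell_2$ that is not absolutely summing -- (a projection onto) the identity in the Hilbertian case, the Paley projection $Pf=(\widehat f(2^n))_{n\in\NN}$ for $H^{1}(\TT)$ -- and both dispose of the quotient statement by $\op{MIN}/\op{MAX}$ duality. The one genuine difference is how you certify that $P$ is not absolutely summing: you factor $\op{Id}_{\ell_2}$ through $P$ via the lacunary embedding $J:\ell_2\to H^1(\TT)$, whose boundedness is not part of Paley's theorem but is the classical Zygmund/Khintchine inequality stating that the $L^1$- and $L^2$-norms are equivalent on Hadamard lacunary series (standard, but an extra ingredient you should cite); the paper instead notes that $(e^{i2^n t})_{n\in\NN}$ is weakly null in $H^1(\TT)$ by Riemann--Lebesgue while its images form an orthonormal basis of $\ell_2$, contradicting the complete continuity of absolutely summing operators -- a slightly lighter argument. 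Two small touch-ups: for a non-separable Hilbertian subspace there is no isomorphism onto $\ell_2$, so (as the paper indicates) first compose with a coordinate projection onto a separable infinite-dimensional part; and your duality step uses the implication that if $(H^1)^{*}\cong\op{MIN}(L^\infty(\TT))/H^1(\TT)^{\perp}$ were completely isomorphic to a minimal space then $H^1(\TT)$ would be completely isomorphic to a maximal one -- this is true, but it is worth observing that it follows because bounded maps into minimal spaces are automatically completely bounded, so one dualises any bounded $u:H^1\to Y$ and restricts $u^{**}$ to $H^1\subset (H^1)^{**}$; a naive appeal to ``dual of minimal is maximal'' plus passing to the subspace $H^1$ of the bidual would not suffice, since maximality does not pass to subspaces.
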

\begin{proof}
According to Theorem \ref{nonmax}, to show non-maximality of $X \subset \op{MAX}(L^1)$, we need to produce an operator $T:X \to \ell_2$ that is not absolutely summing.

If $X$ is Hilbertian then we just take the identity (or an orthogonal projection if $X$ happens to be non-separable)\footnote{Note that Oikhberg \cite{Oi} proved that any homogeneous (an operator space $X$ is \textbf{homogeneous} if every contraction $T:X \to X$ is a complete contraction) Hilbertian subspace of $\op{MAX}(L^1)$ is completely isomorphic to $\mathcal{R} + \mathcal{C}$, and maximal operator spaces are homogeneous, so our result is definitely not new, but the proof is very quick.}.

Let us turn our attention to the Hardy space. We need to exhibit a bounded operator $T: H^{1}(\TT) \to \ell_2$ that is not absolutely summing. Consider the famous ``Paley projection'' $Pf = (\widehat{f}(2^{n}))_{n \in \NN}$; Paley proved (cf. \cite[Theorem 6.7]{Du}) that it is bounded. Yet, it is not absolutely summing because the sequence $(e^{i2^{n} t})_{n \in \NN}$ converges weakly to $0$ in $H^{1}(\TT)$ (Riemann-Lebesgue lemma) and the sequence of its images is an orthonormal basis of $\ell_2$, so it is not norm convergent, and absolutely summing operators are known to be completely continuous (cf. \cite[Theorem 2.17]{DJT}).
\end{proof}
We will now need the aforementioned theorem of Kalton and Pe\l czy\'{n}ski to produce new examples.
\begin{tw}[Kalton, Pe\l czy\'{n}ski, \cite{KP}]\label{KP}
Suppose that $X$ is an infinite-dimensional Banach space that either (cf. \cite[Chapter 11, Chapter 13]{DJT} for both these notions):
\begin{enumerate}[{\normalfont(i)}]
\item has infinite cotype (contains copies of $\ell_{\infty}^{n}$'s uniformly);
\item is K-convex (does not contain copies of $\ell_1^n$'s uniformly).
\end{enumerate}
Assume also that there is a surjective linear operator $T:L^{1} \to X$. Then the kernel $\op{ker}T$ is not a GT-space.
\end{tw}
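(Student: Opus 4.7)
The plan is a proof by contradiction: assume $\ker T$ is a GT-space, so every bounded $u : \ker T \to \ell_2$ is absolutely summing. I aim to exhibit, in each case, an explicit bounded but non-$1$-summing operator $\ker T \to \ell_2$, using the hypothesis on $X$ to inject ``Hilbertian'' geometry into $\ker T$ via lifts through the quotient $T$. First I would dualize $T$ to the short exact sequence $0 \to X^* \to L^\infty \to (\ker T)^* \to 0$, and record the standing tools: Grothendieck's theorem on $L^1$, Pietsch factorization of $1$-summing operators, Pisier's K-convexity theorem (K-convexity is self-dual and forces non-trivial type on $X$, with the Rademacher projection bounded on $L^2(X)$), and the Maurey--Pisier theorem (infinite cotype yields uniform $\ell_\infty^n$-copies).

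For case (i), the Maurey--Pisier theorem supplies a uniform family of embeddings $\ell_\infty^n \hookrightarrow X$. Using a $(1+\varepsilon)$-open-mapping selection of $T$, lift the coordinate basis of each $\ell_\infty^n$ to $L^1$ in two distinct ways; the pairwise differences $h_{n,i}$ lie in $\ker T$ and faithfully record the $\ell_\infty^n$-combinatorics. Sending $h_{n,i}$ to the standard basis of an $\ell_2^n$-block of $\ell_2$ defines a candidate operator $u$ that is bounded by a Khintchine / Rademacher-averaging argument applied to the $L^1$-norms of the lifts. However, $\pi_1(u)$ is bounded below by the $\pi_1$-norm of the natural map $\ell_\infty^n \to \ell_2^n$, which grows like $\sqrt{n}$, so $u$ cannot be $1$-summing. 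The contradiction with the GT hypothesis closes this case.

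For case (ii), the argument is dual in spirit. K-convexity of $X$ excludes uniform $\ell_1^n$'s in $X$ and passes to $X^*$; combined with the fact that $\ker T$ inherits cotype $2$ from being a GT-space, the bounded Rademacher projection on $L^2(X)$ supplies a Rademacher-averaging construction from $L^1$ to $X$ whose restriction to $\ker T$, composed with a choice of $\ell_2$-valued coordinates in $X$, is bounded but has divergent $\pi_1$-norm. The main obstacle is precisely the quantitative $\pi_1$-lower bound on the constructed $u$ in either case: one must pinpoint finite-dimensional witnesses in $\ker T$ whose summing norms diverge, which requires carefully combining the $\ell_\infty^n$- (or K-convex-) structure of $X$ with Grothendieck's theorem on $L^1$ to block any factorization of $u$ through an $\ell_1$-type space. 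A secondary subtlety is ensuring the chosen lifts from $X$ to $L^1$ are simultaneously norm-controlled and ``independent enough'' to realize genuine $\ell_2$-orthogonality in the image.
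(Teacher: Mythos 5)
This statement is not proved in the paper at all: it is quoted as a black box from Kalton--Pe\l czy\'nski \cite{KP}, precisely because its proof is a substantial piece of Banach-space theory. So the comparison is with the original argument, and your sketch does not reconstruct it; the gaps are fatal rather than cosmetic. In case (i), the operator $u$ you describe is defined only on the span of the vectors $h_{n,i}$, i.e.\ on a subspace of $\op{ker}T$. The GT property does not pass to subspaces (the span of a Gaussian sequence in $L^1$ is isometric to $\ell_2$, which is not GT, although $L^1$ is), and a bounded $\ell_2$-valued operator on a subspace need not extend to the whole space; so even a successful construction on that span would not contradict the assumption that $\op{ker}T$ is a GT-space. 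More fundamentally, the $h_{n,i}$ --- differences of two lifts of the $\ell_\infty^n$-basis vectors --- retain no trace of the $\ell_\infty^n$-structure of $X$: they satisfy $Th_{n,i}=0$, the ``second lift'' is completely arbitrary (it may coincide with the first, giving $h_{n,i}=0$), and you exhibit no factorization through which $\pi_1(\ell_\infty^n\to\ell_2^n)$ could lower-bound $\pi_1(u)$. Both the boundedness of $u$ (``Khintchine / Rademacher averaging'') and the divergence of $\pi_1(u)$ --- which is the entire content of the theorem --- are asserted, not proved.

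Case (ii) is weaker still: it leans on ``$\op{ker}T$ inherits cotype $2$ from being a GT-space,'' which is not an available fact (whether every GT-space has cotype $2$ is, to my knowledge, open; at the very least it would need proof), and no operator on $\op{ker}T$ is actually constructed --- a map defined on $L^1$ and compatible with $T$ restricts to zero on the kernel, and ``a choice of $\ell_2$-valued coordinates in $X$'' is not a definition. In short, the proposal is a plan whose crucial mechanism --- transferring the finite-dimensional structure of the quotient $X$ into quantitative non-GT behaviour of the kernel --- is exactly the hard step that Kalton and Pe\l czy\'nski's paper carries out by a genuinely different, structural route; it cannot be obtained by the naive lifting and averaging sketched here.
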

We will now use this theorem.
\begin{lem}\label{steinhaus}
Suppose that $(s_n)_{n \in \NN}$ is a sequence of i.i.d.~Steinhaus random variables, i.e.~uniformly distributed on the unit circle. Consider the map $T: L^1\to \ell_{\infty}$ given by $Tf := (\int f s_n)_{n \in \NN}$. Then the image of $T$ is equal to $c_0$ and $T: L^1 \to c_0$ is a quotient map.
\end{lem}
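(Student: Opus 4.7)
The plan is to separate the claim into two parts: first, that $Tf \in c_0$ for every $f \in L^1$, so that the corestriction $T : L^1 \to c_0$ makes sense; and second, that this corestriction is a metric surjection. For the second part I would dualise and use the standard characterisation that an operator between Banach spaces is a quotient map if and only if its adjoint is an isometric embedding.

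For the first claim, $|s_n|\equiv 1$ immediately yields $\|T\|\leq 1$ as a map $L^1\to\ell_\infty$. To land in $c_0$, I would first treat $f\in L^2(\Omega,\mu)$: the Steinhaus sequence is orthonormal in $L^2$, so Bessel's inequality forces $\int f s_n\to 0$. Since $(\Omega,\mu)$ is a probability space, $L^2$ sits densely in $L^1$, and combining $\|T\|\leq 1$ with closedness of $c_0$ in $\ell_\infty$ extends the conclusion to arbitrary $f\in L^1$.

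For the second claim, the adjoint of $T:L^1\to c_0$ is the map $T^{\ast}:\ell_1\to L^{\infty}(\Omega,\mu)$ sending $e_n\mapsto s_n$, so I must verify that $(a_n)\mapsto\sum_n a_n s_n$ is an isometric embedding. The upper bound $\|\sum_n a_n s_n\|_{\infty}\leq\sum_n|a_n|$ is immediate, and by density it suffices to prove the matching lower bound for finitely supported sequences. The main (mild) subtlety lies here: writing $a_n=|a_n|e^{i\theta_n}$ for $n\leq N$, I would exploit the independence of the $s_n$ to argue that for any $\varepsilon>0$ the event $\{|s_n-e^{-i\theta_n}|<\varepsilon\text{ for all }n\leq N\}$, being a finite intersection of independent events each of positive probability, itself has positive probability. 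On this event $\sum_{n\leq N}a_n s_n$ lies within $\varepsilon\sum_{n\leq N}|a_n|$ of $\sum_{n\leq N}|a_n|$, whence $\|\sum_{n\leq N}a_n s_n\|_{\infty}\geq(1-\varepsilon)\sum_{n\leq N}|a_n|$; letting $\varepsilon\to 0$ finishes the proof.
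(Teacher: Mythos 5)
Your proof is correct and follows essentially the same route as the paper: the contraction estimate, orthonormality of the Steinhaus variables plus density of $L^2$ in $L^1$ to land in $c_0$, and the identification of $T^{\ast}:\ell_1 \to L^{\infty}$ as an isometric embedding to conclude that $T$ is a quotient map onto $c_0$. The only difference is that you spell out, via the positive-probability event on which each $s_n$ is close to $e^{-i\theta_n}$, the lower bound $\bigl\|\sum_n a_n s_n\bigr\|_{\infty} \geqslant \sum_n |a_n|$ that the paper dismisses as ``easily seen''.
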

\begin{proof}
First of all, the estimate $|\int f s_n| \leqslant \|s_n\|_{\infty} \|f\|_1 = \|f\|_1$ shows that $T$ is a contraction. By independence, $s_n$ are orthonormal, so $T(L^{2}) \subset \ell_{2} \subset c_{0}$. Since $L^2$ is dense in $L^1$, $T(L^1)$ is contained densely in $c_{0}$. The dual operator $T^{\ast}: \ell_1 \to L^{\infty}$ is easily seen to be isometric, so $T$ has closed image and is a quotient map.
\end{proof}
\begin{uw}
One can easily see that, for the above conclusions to hold, it suffices to assume that the random variables $(s_n)$ have mean $0$ (so that the image of $T$ is contained in $c_0$), their law is supported in the unit disc and the unit circle is contained in the support (to ensure that $T^{\ast}$ is isometric).
\end{uw}
\begin{wn}\label{minquot}
The kernel of the map $T: L^{1} \to c_0$ from Lemma \ref{steinhaus} is a non-maximal subspace of $\op{MAX}(L^1)$. Its dual, equal to $\op{MIN}(L^{\infty}) \slash \op{Ran}(T^{\ast})$, is not minimal. Moreover, $\op{Ran}(T^{\ast})$ is equal to $\overline{\op{span}}(s_n)_{n \in \NN}$, so we obtain a very explicit quotient of a minimal operator space that is not minimal. More generally, quotient of $\op{MIN}(L^{\infty})$ by any isomorphic copy of $\ell_1$ is not minimal.
\end{wn}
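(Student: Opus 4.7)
The plan is to assemble results already proved in the paper. First, apply Theorem \ref{KP} to the quotient map $T: L^1 \to c_0$ from Lemma \ref{steinhaus}: $c_0$ is infinite-dimensional and has infinite cotype, because the span of any $n$ coordinate vectors is isometric to $\ell_{\infty}^n$. Theorem \ref{KP} then gives that $K := \op{ker}(T)$ is not a GT-space, and Theorem \ref{nonmax} yields that $K \subset \op{MAX}(L^1)$ is not completely isomorphic to a maximal operator space.

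For the dual statement, use operator space duality as recalled in Section \ref{prelim}: $\op{MAX}(L^1)^{\ast} \cong \op{MIN}(L^{\infty})$ and the dual of a subspace is the corresponding quotient of the dual. Hence $K^{\ast} \cong \op{MIN}(L^{\infty})/K^{\perp}$, and the standard identification $K^{\perp} = \op{Ran}(T^{\ast})$ (valid because $T$ has closed range, as recorded in Lemma \ref{steinhaus}) gives $K^{\ast} \cong \op{MIN}(L^{\infty})/\op{Ran}(T^{\ast})$. Since non-maximality of $K$ is equivalent to non-minimality of $K^{\ast}$, the quotient $\op{MIN}(L^{\infty})/\op{Ran}(T^{\ast})$ is not completely isomorphic to a minimal operator space.

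To pin down the range concretely, evaluate $T^{\ast}$ on the standard basis $(e_n)$ of $\ell_1 = c_0^{\ast}$: for $f \in L^1$ the pairing $\langle T^{\ast}e_n, f\rangle = \langle e_n, Tf\rangle = \int f s_n$ gives $T^{\ast}e_n = s_n$, so $\op{Ran}(T^{\ast}) = \overline{\op{span}}(s_n)$. The span is isometric to $\ell_1$ because $T^{\ast}$ is isometric (again by Lemma \ref{steinhaus}). The final assertion, about arbitrary isomorphic copies of $\ell_1$ in $L^{\infty}$, should follow by replaying this template with $c_0$ replaced by a suitable Banach space to which Theorem \ref{KP} applies: given a weak-star closed copy $E \cong \ell_1$ inside $L^{\infty}$, write $E = M^{\perp}$ for some closed $M \subset L^1$, note that $L^1/M$ is then a predual of $\ell_1$ and in particular has infinite cotype, and apply Theorem \ref{KP} to the quotient map $L^1 \to L^1/M$; the rest of the duality argument runs unchanged. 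I expect the principal obstacle to be bookkeeping: carefully verifying the identification $K^{\perp} = \op{Ran}(T^{\ast})$ at the level of operator-space (rather than merely Banach-space) duality, and in the general case ensuring that the auxiliary quotient of $L^1$ really satisfies the hypotheses of Theorem \ref{KP}.
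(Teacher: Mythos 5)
Your treatment of the Steinhaus-specific part is essentially the paper's: Theorem \ref{KP} applied to $T\colon L^1 \to c_0$ (with $c_0$ of infinite cotype), Theorem \ref{nonmax}, the identification $\op{ker}(T)^{\perp} = \op{Ran}(T^{\ast})$ (the paper gets weak$^{\ast}$-closedness from the Krein-\v{S}mulian theorem, you from the closed range theorem -- equivalent here), and $T^{\ast}e_n = s_n$. The genuine gap is in the last assertion. The statement concerns an \emph{arbitrary} isomorphic copy $\iota(\ell_1) \subset L^{\infty}$, whereas your sketch only treats weak$^{\ast}$-closed copies $E = M^{\perp}$; an isomorphic copy of $\ell_1$ in $L^{\infty}$ need not be weak$^{\ast}$-closed, and nothing in your argument reduces the general case to that one. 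The paper avoids the issue by dualising the embedding itself: $\iota^{\ast}\colon (L^{\infty})^{\ast} \to \ell_{\infty}$ is a surjection, $(L^{\infty})^{\ast}$ is again an $L^1$-space (because $(L^{\infty})^{\ast\ast}$ is a commutative von Neumann algebra), and $\ell_{\infty}$ obviously has infinite cotype, so Theorems \ref{KP} and \ref{nonmax} show that $\op{ker}(\iota^{\ast}) \cong (\iota(\ell_1))^{\perp} \cong \left(L^{\infty}\slash\iota(\ell_1)\right)^{\ast}$ is not maximal, and hence the quotient is not minimal -- with no closedness hypothesis on $\iota(\ell_1)$ at all.

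Two secondary points. First, even in your weak$^{\ast}$-closed case you need $L^1/M$ (an isomorphic predual of $\ell_1$) to satisfy the hypotheses of Theorem \ref{KP}; that it has infinite cotype is true but not obvious (it uses that isomorphic preduals of $\mathcal{L}_1$-spaces are $\mathcal{L}_{\infty}$-spaces), and you leave it unjustified -- the paper's route makes the issue disappear because the target of the dualised map is $\ell_{\infty}$. Second, your step ``non-maximality of $K$ is equivalent to non-minimality of $K^{\ast}$'' uses the nontrivial direction of the duality: the clean implication is ``$Q$ completely isomorphic to a minimal space $\Rightarrow Q^{\ast}$ completely isomorphic to a maximal space'', so deducing non-minimality of $K^{\ast}$ from non-maximality of $K$ requires passing through $K^{\ast\ast}$ and the facts $\op{MAX}(X)^{\ast\ast} \cong \op{MAX}(X^{\ast\ast})$ and that $K \hookrightarrow K^{\ast\ast}$ is a complete isometry (or, as in the paper's argument for the general assertion, one simply works with the dual of the quotient directly, where only the clean direction is needed). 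This is repairable in a sentence, but it should be said; the weak$^{\ast}$-closedness restriction in the final assertion is the real gap.
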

\begin{proof}
We just need to justify the fact that $\op{Ran}(T^{\ast})$ is weak$^{\ast}$-closed and the last assertion. The former follows from the Krein-\v{S}mulian theorem, because $T^{\ast}$ is a weak$^{\ast}$-continuous isometry. For the latter, if $\iota:\ell_1 \subset L^{\infty}$ is an isomorphic embedding then it dualises to a surjection $\iota^{\ast}: (L^{\infty})^{\ast} \to \ell_{\infty}$. Since $(L^{\infty})^{\ast\ast}$ is a commutative von Neumann algebra, $(L^{\infty})^{\ast}$ is isometric to an $L^{1}$-space\footnote{Every commutative von Neumann algebra is isomorphic to $L^{\infty}(\Omega, \mu)$, where the measure space $(\Omega, \mu)$ is localisable, essentially because the lattice of projections is complete.}. This means that, by Theorem \ref{nonmax} and Theorem \ref{KP}, $\op{ker}(\iota^{\ast}) \cong (\iota(\ell_1))^{\perp} \cong \left( L^{\infty} \slash \iota(\ell_1)\right)^{\ast}$ is not maximal, so $L^{\infty} \slash \iota(\ell_1)$ is, \emph{a fortiori}, not minimal.
\end{proof}

To end this section, let us formulate a result dual to Theorem \ref{KP}:
\begin{tw}\label{kpquot}
Suppose that $X$ is a Banach space such that $X^{\ast}$ satisfies the assumptions of Theorem \ref{KP}. If $\iota:X \to L^{\infty}$ is an isomorphic embedding then $\op{MIN}(L^{\infty}) \slash \iota(X)$ is not minimal.
\end{tw}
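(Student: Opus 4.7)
The strategy is to dualise and reduce to Theorem \ref{nonmax} combined with Theorem \ref{KP}, mirroring the argument used at the end of the proof of Corollary \ref{minquot}.

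First, I would take adjoints. Since $\iota : X \to L^{\infty}$ is an isomorphic embedding, its adjoint $\iota^{\ast}: (L^{\infty})^{\ast} \to X^{\ast}$ is a surjection onto $X^{\ast}$, in fact a quotient map up to the equivalence of norms (standard Banach-space duality: a bounded-below operator dualises to a surjection). Next, invoke the footnote observation that $(L^{\infty})^{\ast}$ is isometrically an $L^{1}$-space: indeed $(L^{\infty})^{\ast\ast}$ is a commutative von Neumann algebra, hence of the form $L^{\infty}(\Omega',\mu')$ for some localisable measure space, so $(L^{\infty})^{\ast}$ is isometric to $L^{1}(\Omega',\mu')$. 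Thus $\iota^{\ast}$ may be viewed as a surjection $L^{1}(\Omega',\mu') \to X^{\ast}$.

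Now apply Theorem \ref{KP} to this surjection. Since by hypothesis $X^{\ast}$ has infinite cotype or is K-convex, the conclusion is that $\op{ker}(\iota^{\ast})$ is not a GT-space; that is, there exists a bounded operator $S: \op{ker}(\iota^{\ast}) \to \ell_{2}$ which is not absolutely summing. Regard $\op{ker}(\iota^{\ast})$ as a subspace of $\op{MAX}(L^{1}(\Omega',\mu'))$, and apply Theorem \ref{nonmax}: we conclude that $\op{ker}(\iota^{\ast})$ is not completely isomorphic to a maximal operator space.

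Finally, identify the dual: at the operator space level $(\op{MIN}(L^{\infty}))^{\ast} \cong \op{MAX}(L^{1}(\Omega',\mu'))$, and under this identification
$$
\bigl( \op{MIN}(L^{\infty}) \slash \iota(X) \bigr)^{\ast} \cong \iota(X)^{\perp} = \op{ker}(\iota^{\ast}),
$$
completely isometrically, where the right-hand side carries the subspace structure from $\op{MAX}(L^{1}(\Omega',\mu'))$. Were $\op{MIN}(L^{\infty}) \slash \iota(X)$ completely isomorphic to a minimal operator space, its dual would be completely isomorphic to a maximal operator space, contradicting the conclusion of the previous paragraph.

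I do not foresee a real obstacle here; the only points requiring care are the two identifications that come almost for free from the setup in Section \ref{prelim}, namely that $(L^{\infty})^{\ast}$ is an $L^{1}$-space (already flagged in the footnote to Corollary \ref{minquot}) and that duality interchanges the classes ``completely isomorphic to minimal'' and ``completely isomorphic to maximal''. With these in hand the argument is just the dual version of Corollary \ref{minquot}.
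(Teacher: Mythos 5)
Your argument is correct and is essentially the paper's own proof: dualise $\iota$ to the surjection $\iota^{\ast}:(L^{\infty})^{\ast}\to X^{\ast}$, view $(L^{\infty})^{\ast}$ as an $L^{1}$-space, apply Theorem \ref{KP} and Theorem \ref{nonmax} to conclude $\op{ker}\iota^{\ast}$ is not maximal, and identify $\op{ker}\iota^{\ast}=\iota(X)^{\perp}$ with the dual of the quotient. The paper merely states this more tersely; your spelled-out identifications (the $L^{1}$-structure of $(L^{\infty})^{\ast}$ and the quotient--annihilator duality) are exactly what it relies on implicitly.
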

\begin{proof}
The isomorphic embedding $\iota: X \to L^{\infty}$ dualises to the surjection $\iota^{\ast}: (L^{\infty})^{\ast} \to X^{\ast}$, whose kernel $\op{ker} \iota^{\ast} \subset \op{MAX}((L^{\infty})^{\ast})$ is not maximal by combination of Theorem \ref{KP} and Theorem \ref{nonmax}.
\end{proof}
\begin{wn}\label{lpquot}
Let $X \subset L^{\infty}$ be isomorphic to an $L^{p}$-space. If $p=\infty$ then the quotient $L^{\infty} \slash X$ is a minimal operator space. Otherwise, $L^{\infty} \slash X$ is not completely isomorphic to a minimal operator space.
\end{wn}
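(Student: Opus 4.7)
The statement splits into the two announced cases.

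\emph{Case $p = \infty$.} Since $X$ is isomorphic to an $L^{\infty}$-space, it is an injective Banach space, and consequently there exists a bounded linear projection $P : L^{\infty} \to L^{\infty}$ with range $X$. The decisive point is that, because $L^{\infty}$ carries its minimal operator space structure, every bounded linear map into it is automatically completely bounded; in particular both $P$ and $I - P$ are completely bounded. The short exact sequence $0 \to X \to L^{\infty} \to L^{\infty} \slash X \to 0$ therefore splits completely: writing $q$ for the complete quotient map, the factorisation $I - P = \widetilde{\phi} \circ q$ produces a completely bounded bijection $\widetilde{\phi} : L^{\infty} \slash X \to \op{ker} P$, whose inverse $\op{ker} P \hookrightarrow L^{\infty} \to L^{\infty} \slash X$ is a complete contraction as a composition of a complete isometry with a complete quotient map. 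Thus $L^{\infty} \slash X$ is completely isomorphic to $\op{ker} P$, which is minimal as a subspace of $\op{MIN}(L^{\infty})$.

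\emph{Case $1 \leqslant p < \infty$.} The plan here is to apply Theorem \ref{kpquot} directly to the isomorphic embedding $X \hookrightarrow L^{\infty}$; all that remains is to check that $X^{\ast} \cong (L^{p})^{\ast}$ satisfies the hypotheses of Theorem \ref{KP}. For $p = 1$ we have $X^{\ast} \cong L^{\infty}$, which contains copies of $\ell_{\infty}^{n}$ uniformly and hence has infinite cotype, meeting condition (i). For $1 < p < \infty$, $X^{\ast} \cong L^{p'}$ with $1 < p' < \infty$ has nontrivial type $\min(p', 2) > 1$ and so is K-convex, meeting condition (ii). In either subcase Theorem \ref{kpquot} yields the non-minimality of $L^{\infty} \slash X$.

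The main obstacle is the $p = \infty$ case: one has to upgrade a purely Banach-space complementation of $X$ inside $L^{\infty}$ to a completely bounded one, and the minimality of $L^{\infty}$ is exactly what delivers this upgrade, since any bounded projection onto a subspace of a minimal operator space is automatically completely bounded. The case $p < \infty$, by contrast, is essentially bookkeeping built on Theorem \ref{kpquot}, and only requires recalling well-known type/cotype properties of $L^{q}$-spaces.
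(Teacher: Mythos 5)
Your argument is correct and follows essentially the same route as the paper: for $p=\infty$ you use injectivity of $L^{\infty}$-spaces to get a bounded (hence, by minimality of $\op{MIN}(L^{\infty})$, completely bounded) projection and identify $L^{\infty}\slash X$ with its complementary range, a subspace of $\op{MIN}(L^{\infty})$; for $p<\infty$ you invoke Theorem \ref{kpquot} after checking infinite cotype of $X^{\ast}$ for $p=1$ and K-convexity for $1<p<\infty$, exactly as in the paper. The only difference is that you spell out the splitting $I-P=\widetilde{\phi}\circ q$ in more detail than the paper does, which is harmless.
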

\begin{proof}
If $X$ is isomorphic to an $L^{\infty}$-space then it is $\lambda$-injective\footnote{It means that every contraction from $Y \subset Z$ to $X$ admits an extension to $Z$ of norm not greater than $\lambda$.} as a Banach space (for some $\lambda >0$), so there is a bounded projection $P: L^{\infty} \to X$ (extend $\op{Id}_{X}$ to $L^{\infty}$). This means that the quotient $L^{\infty} \slash X$ may be identified with $\op{Ran}(\op{Id}_X - P)$, so it is a minimal operator space. To wit, the isomorphism $T: \op{Ran}(\op{Id}_{X} - P) \to L^{\infty} \slash X$ is completely bounded (as a restriction of the canonical quotient map) and its inverse is also completely bounded, since $\op{Ran}(\op{Id}_X - P)$ is minimal.

If $X$ is isomorphic to an $L^{p}$-space for $p<\infty$ then $X^{\ast}$ is K-convex for $1<p<\infty$ and has infinite cotype for $p=1$, so Theorem \ref{kpquot} implies that $L^{\infty} \slash X$ is not completely isomorphic to a minimal operator space.  
\end{proof}
\section{Proof of the main theorem}\label{mainproof}
This section is devoted to the proof of Theorem \ref{main} announced in the Introduction. We will rely heavily on the results obtained in the previous section. Let us first make Theorem \ref{main} more explicit:
\begin{tw}\label{nonminamalg}
Let $(\Omega, \PP)$ be a probability space and let $X=Y= L^{\infty}(\Omega, \PP)$. Let $(s_n)_{n \in \NN}$ be, once again, a sequence of i.i.d.~Steinhaus random variables and let $Z= \overline{\op{span}}(s_n)_{n \in \NN}$. Then the amalgamated $\ell_{\infty}$-direct sum $(X\oplus_{Z} Y)_{\infty}$ is not completely isomorphic to a minimal operator space.
\end{tw}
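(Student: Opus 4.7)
My plan is to reduce the statement to Proposition \ref{amalgfib}\eqref{nonminam}, applied with $X = Y = L^{\infty}(\Omega, \PP)$ and the amalgamating subspace $Z = \overline{\op{span}}(s_n)_{n \in \NN}$. For this I need to verify the hypothesis of that proposition: whenever $Z$ is embedded into an $L^{\infty}$-space, the resulting quotient of $\op{MIN}(L^{\infty})$ by the image of $Z$ is not completely isomorphic to a minimal operator space.

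The first step, which is essentially a reading of Lemma \ref{steinhaus}, is to identify $Z$ at the Banach-space level as $\ell_{1}$. The adjoint $T^{\ast}: \ell_{1} \to L^{\infty}$ of the quotient map $T: L^{1} \to c_{0}$ used there was already noted to be an isometry, and its image is precisely $\overline{\op{span}}(s_n)_{n \in \NN}$, with the standard unit vector $e_{n}$ sent (up to a scalar) to $s_{n}$. So $Z$ is isometrically isomorphic to $\ell_{1}$ as a Banach space.

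With this identification in hand, every isometric embedding of $Z$ into an $L^{\infty}$-space produces an isomorphic copy of $\ell_{1}$ inside that $L^{\infty}$. I then invoke the final ``more generally'' assertion of Corollary \ref{minquot}, which states that the quotient of $\op{MIN}(L^{\infty})$ by any isomorphic copy of $\ell_{1}$ fails to be minimal. This confirms the hypothesis of Proposition \ref{amalgfib}\eqref{nonminam}, and applying that proposition with $X = Y = L^{\infty}(\Omega, \PP)$ delivers the theorem.

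The main obstacle has effectively been dealt with already in the previous section: the nontrivial ingredients (the Kalton--Pe\l czy\'{n}ski theorem combined with Grothendieck's theorem and the injectivity of the row Hilbert space $\mathcal{R}$, yielding Corollary \ref{minquot}, together with the general reduction mechanism of Proposition \ref{amalgfib}) are all in place. What remains is the straightforward bookkeeping step of fitting the concrete subspace $Z$, viewed as a particular copy of $\ell_{1}$ in $L^{\infty}$, into that machinery; I do not anticipate any serious difficulty in that final step.
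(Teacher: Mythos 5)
Your argument is correct, but it takes a genuinely different route from the paper's own proof of Theorem \ref{nonminamalg}. The paper argues by an explicit structural identification: it shows that $(X \oplus_Z Y)_{\infty}$ is completely isometric to $\op{MIN}(L^{\infty}(\Omega', \PP')) \slash \overline{\op{span}}(s_n')_{n \in \NN}$, where $(\Omega', \PP') = (\Omega \sqcup \Omega, \frac{1}{2}(\PP \sqcup \PP))$ and $s_n' := s_n \sqcup -s_n$, and then checks by a short computation (using the symmetry of the Steinhaus law and independence) that $(s_n')_{n \in \NN}$ is again an i.i.d.~Steinhaus sequence, so that Corollary \ref{minquot} applies verbatim. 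You instead feed the theorem into Proposition \ref{amalgfib}\eqref{nonminam}, verifying its hypothesis by noting that $Z$ is isometrically $\ell_1$ (via the isometry $T^{\ast}$ from Lemma \ref{steinhaus}, whose range is exactly $\overline{\op{span}}(s_n)_{n \in \NN}$) and then invoking the final assertion of Corollary \ref{minquot}, that the quotient of $\op{MIN}(L^{\infty})$ by \emph{any} isomorphic copy of $\ell_1$ is not minimal. This is sound, and it is essentially the route the paper itself follows in the theorem stated right after Theorem \ref{nonminamalg} (amalgamation over a space isomorphic to an $L^p$-space with $p<\infty$; note $\ell_1$ is an $L^1$-space), about which the author explicitly remarks that it implies Theorem \ref{nonminamalg}. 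What your route buys is economy and generality: no probabilistic computation is needed, and only the Banach-space isomorphism class of $Z$ enters. What the paper's route buys is finer structural information: it exhibits the amalgamated sum, completely isometrically, as a quotient of a single $L^{\infty}$-space by the closed span of a Steinhaus sequence, which is precisely the ``light on the structure'' the author cites as the reason for including the separate proof.
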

The idea of the proof is very simple: we already know that $\op{MIN}(L^{\infty}(\Omega,\PP)) \slash \overline{\op{span}}(s_n)_{n \in \NN}$ is not minimal (Corollary \ref{minquot}), regardless of the choice of the probability space $(\Omega, \PP)$. We will show that $(X \oplus_Z Y)_{\infty}$ is completely isometric to this kind of space, for some other probability space $(\Omega', \PP')$:
\begin{stw}
Let $X$, $Y$, and $Z$ be as in the statement of Theorem \ref{nonminamalg}. Then there exists a probability space $(\Omega', \PP')$ and a sequence $(s_n')_{n \in \NN}$ of i.i.d.~Steinhaus random variables on $(\Omega',\PP')$ such that 
$$
(X \oplus_Z Y)_{\infty} \cong \op{MIN}(L^{\infty}(\Omega', \PP')) \slash \overline{\op{span}}(s_n')_{n \in \NN}.
$$
\end{stw}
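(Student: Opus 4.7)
The plan is to realise the ambient space $X \oplus_{\infty} Y = L^{\infty}(\Omega,\PP) \oplus_{\infty} L^{\infty}(\Omega,\PP)$ itself as an $L^{\infty}$-space of a probability measure. Put $\Omega' := \Omega \sqcup \Omega$ and $\PP' := \tfrac{1}{2}(\PP \sqcup \PP)$, so that $(\Omega', \PP')$ is a probability space; write $\Omega_1, \Omega_2$ for the two copies of $\Omega$ sitting inside $\Omega'$. The map $\Phi: L^{\infty}(\Omega,\PP) \oplus_{\infty} L^{\infty}(\Omega,\PP) \to L^{\infty}(\Omega', \PP')$ defined by $\Phi(f,g) := f\,\mathbbm{1}_{\Omega_1} + g\,\mathbbm{1}_{\Omega_2}$ is a surjective isometric isomorphism of Banach spaces; since $L^{\infty}$-spaces carry their minimal quantisation and $\ell_{\infty}$-direct sums of minimal operator spaces are minimal, $\Phi$ is automatically a complete isometry between the corresponding MIN quantisations.

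Next I define $s'_n \in L^{\infty}(\Omega', \PP')$ by setting $s'_n(\omega) := s_n(\omega)$ for $\omega \in \Omega_1$ and $s'_n(\omega) := -s_n(\omega)$ for $\omega \in \Omega_2$. Then $\Phi(s_n, -s_n) = s'_n$, and since $\overline{\op{span}}\{(z,-z): z \in Z\}$ coincides with $\overline{\op{span}}\{(s_n, -s_n): n \in \NN\}$, the isomorphism $\Phi$ carries the subspace defining the quotient $(X \oplus_Z Y)_{\infty}$ onto $\overline{\op{span}}(s'_n)_{n \in \NN}$. Consequently $\Phi$ descends to a complete isometric isomorphism $(X \oplus_Z Y)_{\infty} \cong \op{MIN}(L^{\infty}(\Omega', \PP'))/\overline{\op{span}}(s'_n)_{n \in \NN}$.

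What remains is to verify that $(s'_n)_{n \in \NN}$ is again an i.i.d.~Steinhaus sequence on $(\Omega', \PP')$. For the one-dimensional distribution, the Steinhaus law on the unit circle is invariant under multiplication by $-1$, so $\PP' \circ (s'_n)^{-1}$ is a $\tfrac{1}{2}$--$\tfrac{1}{2}$ mixture of two copies of the Steinhaus law, hence itself Steinhaus. For independence, fix $n_1 < \cdots < n_k$ and Borel sets $B_1, \dots, B_k \subset \CC$; the joint probability $\PP'\bigl(\bigcap_j \{s'_{n_j} \in B_j\}\bigr)$ decomposes as $\tfrac{1}{2}\PP\bigl(\bigcap_j\{s_{n_j} \in B_j\}\bigr) + \tfrac{1}{2}\PP\bigl(\bigcap_j\{-s_{n_j} \in B_j\}\bigr)$, and by independence of the $(s_n)$ together with the $-1$-invariance of each marginal factor, both summands equal $\prod_j \PP(s_{n_j} \in B_j) = \prod_j \PP'(s'_{n_j} \in B_j)$. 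The only genuine obstacle is this book-keeping: one must halve the measure to stay in the probability category, and one must invoke the rotational symmetry of the Steinhaus distribution (already under multiplication by $-1$) to inherit independence after gluing. No operator-space subtlety arises beyond the automatic complete isometry of $\Phi$ granted by minimality.
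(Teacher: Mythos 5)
Your proposal is correct and follows essentially the same route as the paper: glue the two copies into $(\Omega \sqcup \Omega, \tfrac{1}{2}(\PP \sqcup \PP))$, send the diagonal subspace $\{(z,-z)\}$ to $\overline{\op{span}}(s_n \sqcup -s_n)$, and use the symmetry of the Steinhaus law plus independence to see that the glued sequence is again i.i.d.\ Steinhaus. If anything, you verify full joint independence of finite subfamilies where the paper only writes out the pairwise computation, so nothing is missing.
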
 
\begin{proof}
Recall the definition $(X\oplus_Z Y)_{\infty}:= X \oplus_{\infty} Y \slash \{(z,-z): z \in Z\}$. If $X =L^{\infty}(\Omega_1, \PP_1)$ and $Y=L^{\infty}(\Omega_2, \PP_2)$ ($(\Omega_1, \PP_1)$ and $(\Omega_2, \PP_2)$ are just copies of $(\Omega, \PP)$) then $X \oplus_{\infty} Y \simeq L^{\infty}(\Omega_1 \sqcup \Omega_2, \PP_1 \sqcup \PP_2)$, where ``$\sqcup$'' denotes the disjoint union. We define a new probability space $(\Omega', \PP') := (\Omega_1 \sqcup \Omega_2, \frac{1}{2}(\PP_1 \sqcup \PP_2))$ and immediately obtain a complete isometry $X \oplus_{\infty} Y \simeq L^{\infty}(\Omega', \PP')$. Since $\{(z,-z): z \in Z\} = \overline{\op{span}}(s_n \sqcup -s_n)_{n \in \NN}$, we just need to check that the sequence $(s_n')_{n\in \NN}$, where $s_n':= s_n \sqcup -s_n$, has the same joint distribution as $(s_n)_{n \in \NN}$. It is, fortunately, a matter of simple computation. Let us start with finding the law of a single $s_n'$:
\begin{align}
\PP'(s_n' \in A) &= \frac{1}{2} \PP_1(s_n \in A) + \frac{1}{2} \PP_2(s_n \in -A) \notag \\
&=\frac{1}{2}(\PP_1(s_n\in A) + \PP_2(s_n \in A)) = \PP(s_n \in A), \notag
\end{align}
since the distribution of $s_n$ is symmetric. We are left to show that $s_n'$ and $s_m'$ are independent for $n \neq m$:
\begin{align}
\PP'(s_n' \in A, s_m' \in B) &= \frac{1}{2} \PP_1(s_n \in A, s_m \in B) + \frac{1}{2}\PP_2(s_n \in -A, s_m \in -B) \notag \\
&= \frac{1}{2}(\PP(s_n\in A)\PP(s_m \in B) + \PP(s_n \in -A)\PP(s_m \in -B)) \notag \\
&= \PP(s_n \in A)\PP(s_m \in B) = \PP'(s_n' \in A)\PP'(s_m' \in B), \notag
\end{align}
using the independence of $s_n$ and $s_m$.
\end{proof}
We are now ready to finish the proof of Theorem \ref{nonminamalg}:
\begin{proof}[Proof of Theorem \ref{nonminamalg}]
By the preceding proposition, $(X \oplus_Z Y)_{\infty}$ is completely isometric to the space $\op{MIN}(L^{\infty})\slash \overline{\op{span}}(s_n)_{n\in \NN}$. By Corollary \ref{minquot}, this space is not completely isomorphic to a minimal operator space. This, of course, finishes the proof of Theorem \ref{main}.
\end{proof}

We will provide new examples of non-minimal amalgamated $\ell_{\infty}$-direct sum of $L^{\infty}$-spaces (and non-maximal fibre $\ell_1$-products of $L^{1}$-spaces), using Proposition \ref{amalgfib} and Corollary \ref{lpquot}.
\begin{tw}
If $X \subset L^{\infty}(\Omega, \mu), X \subset L^{\infty}(\Omega', \mu')$ and $X$ is isomorphic to an $L^p$-space for $p < \infty$ then the amalgamated $\ell_{\infty}$-direct sum $\left(\op{MIN}(L^{\infty}(\Omega, \mu)) \oplus_{X} \op{MIN}(L^{\infty}(\Omega', \mu'))\right)_{\infty}$ is not completely isomorphic to a minimal operator space.

Suppose that a Banach space $X$ has infinite cotype or is $K$-convex. Then every fibre $\ell_1$-product $\left(\op{MAX}(L^{1}(\Omega,\mu)) \times_{X} \op{MAX}(L^{1}(\Omega', \mu'))\right)_{1}$ is not completely isomorphic to a maximal operator space.
\end{tw}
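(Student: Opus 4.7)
The plan is to package both assertions as direct applications of Proposition \ref{amalgfib}, having verified its two hypotheses by invoking the material of the previous section. Because each half of Proposition \ref{amalgfib} takes the form ``\emph{if a universal quotient/kernel property holds for $X$, then the amalgamated sum / fibre product is not minimal/maximal}'', the real work is to match that universal property to a combination of earlier results.

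For the first part, I would apply Proposition \ref{amalgfib}(\ref{nonminam}) with our given $X$. Its hypothesis demands that for \textbf{every} isometric embedding $X \hookrightarrow L^{\infty}$, the quotient $\op{MIN}(L^\infty)/X$ fails to be completely isomorphic to a minimal operator space. But this is precisely what Corollary \ref{lpquot} guarantees once $X$ is isomorphic to some $L^p$ with $p<\infty$: that corollary is stated for an arbitrary $L^p$-subspace of $L^{\infty}$, so its conclusion applies uniformly to every embedding. Feeding this into Proposition \ref{amalgfib}(\ref{nonminam}) immediately yields the non-minimality of $\bigl(\op{MIN}(L^{\infty}(\Omega,\mu)) \oplus_X \op{MIN}(L^{\infty}(\Omega',\mu'))\bigr)_\infty$.

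For the second part, I would apply Proposition \ref{amalgfib}(\ref{nonmaxfi}). Its hypothesis asks that for every quotient map $T\colon L^1 \to X$, the kernel $\op{ker} T \subset \op{MAX}(L^1)$ be non-maximal. Here the argument is a two-step combination: since $X$ is infinite-dimensional with infinite cotype or is $K$-convex, Theorem \ref{KP} tells us that $\op{ker} T$ is not a GT-space for any quotient map $T\colon L^1 \to X$; then Theorem \ref{nonmax} upgrades ``not a GT-space'' to ``not completely isomorphic to a maximal operator space'' for subspaces of $\op{MAX}(L^1)$. Together these supply the hypothesis of Proposition \ref{amalgfib}(\ref{nonmaxfi}), giving the desired non-maximality of the fibre $\ell_1$-product.

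There is no genuine obstacle here: the proof is purely a matter of checking that the hypotheses line up. The only point worth double-checking is the ``\emph{for every}'' quantifier in the hypotheses of Proposition \ref{amalgfib}: one must verify that Corollary \ref{lpquot} and the combination Theorem \ref{KP}$+$Theorem \ref{nonmax} produce conclusions that do not depend on the particular embedding or quotient map chosen, only on the isomorphism class of $X$ (for the first part) respectively on the cotype/$K$-convexity of $X$ (for the second). Both are indeed formulated at that level of generality, so the theorem follows.
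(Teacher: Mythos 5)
Your proposal is correct and follows exactly the paper's route: the paper proves this theorem by the same "easy combination" of Proposition \ref{amalgfib}, Corollary \ref{lpquot}, Theorem \ref{KP}, and Theorem \ref{nonmax}, and your verification of the universal quantifiers is just a spelled-out version of that. No further comment is needed.
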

\begin{proof}
Easy combination of Proposition \ref{amalgfib}, Corollary \ref{lpquot}, Theorem \ref{KP}, and Theorem \ref{nonmax}.
\end{proof}
\begin{uw}
The preceding theorem implies Theorem \ref{nonminamalg}, nevertheless we decided to include the latter, because it sheds some light on the structure of this particular amalgamated $\ell_{\infty}$-direct sum.
\end{uw}
\begin{wn}
Suppose that $G$ is a compact abelian group and $S \subset \widehat{G}$ is a Sidon set\footnote{It means that the Fourier transform $\mathcal{F}_{S}: L^{1}(G) \to c_{0}(S)$ given by $\mathcal{F}_S(f):= \widehat{f}_{|S}$ is surjective.}. Then the fibre $\ell_1$-product $\left(\op{MAX}(L^{1}(G)) \times_{c_{0}(S)} \op{MAX}(L^{1}(G)) \right)_{1}$ is not completely isomorphic to a maximal operator space. A concrete example is given by $G = \TT$ and $S = \{2^n\}_{n \in \NN}$.
\end{wn}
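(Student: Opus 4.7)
The plan is to reduce this corollary directly to the preceding theorem with $X = c_{0}(S)$ and with both copies of the source space taken to be $L^{1}(G)$. Once we produce quotient maps $L^{1}(G) \to c_{0}(S)$ on the Banach-space level and verify that $c_{0}(S)$ falls into one of the two cases of the preceding theorem, the conclusion is immediate.

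For the quotient-map input, the Sidon hypothesis furnishes, by definition, the Fourier restriction $\mathcal{F}_{S}: L^{1}(G) \to c_{0}(S)$ as a surjection, and the trivial bound $|\widehat{f}(\chi)| \leqslant \|f\|_{1}$ makes it continuous; the open mapping theorem then upgrades $\mathcal{F}_{S}$ to a quotient map (in the sense used in Proposition \ref{amalgfib}(ii), which only concerns $\op{ker}\mathcal{F}_S$ as a Banach subspace). Using $\mathcal{F}_{S}$ on two disjoint copies of $L^{1}(G)$ we put ourselves exactly in the setting of the preceding theorem with source spaces $L^{1}(\Omega, \mu) = L^{1}(\Omega', \mu') = L^{1}(G)$. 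To pick the Banach-space hypothesis we exploit infinite cotype: assuming $S$ is infinite (which is the only non-trivial case), $c_{0}(S)$ contains $\ell_{\infty}^{n}$ isometrically for every $n$, so it has infinite cotype, which is precisely condition (i) of Theorem \ref{KP}. K-convexity, by contrast, would fail here, so we commit to the cotype branch.

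For the concrete example $G = \TT$ and $S = \{2^{n}\}_{n \in \NN}$, it suffices to cite the classical fact that lacunary sequences in $\ZZ$ are Sidon sets (Sidon's theorem); this is the same type of fact that underlies the use of the Paley projection earlier in the paper, so no new machinery is needed. I do not expect any genuine obstacle — the whole argument is bookkeeping on top of the preceding theorem — the only point where one has to pause is the verification that Proposition \ref{amalgfib}(ii) really applies to the $\mathcal{F}_{S}$ produced by the Sidon property rather than some abstract quotient map, but this is handled by the open mapping theorem as noted above.
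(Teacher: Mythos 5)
Your argument is exactly the one the paper intends: the corollary is an instance of the preceding theorem with $X=c_{0}(S)$, the Sidon property supplying the surjection $\mathcal{F}_{S}:L^{1}(G)\to c_{0}(S)$ (so Theorem \ref{KP} applies via the infinite-cotype branch, $c_0(S)$ containing the $\ell_\infty^n$'s isometrically), and the concrete example follows from the classical fact that lacunary sets are Sidon. Your extra remarks on upgrading $\mathcal{F}_S$ to a quotient map and on discarding the K-convexity branch are correct and consistent with the paper's implicit reasoning, so there is nothing to add.
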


\section{Acknowledgements}
I am greatly indebted to Piotr So\l tan, advisor of my Master's thesis, from which this work has emerged. I would also like to express my gratitude to Micha\l \   Wojciechowski, who made me aware of the paper \cite{KP}.


\begin{thebibliography}{9999}


\bibitem[Bl]{Bl}
D.~Blecher, \emph{A completely bounded characterization of operator algebras}, Math. Ann. \textbf{303} (1995), 227--240.

\bibitem[BP]{BP}
D.~Blecher, V.~Paulsen, \emph{Tensor products of operator spaces}, J. Funct. Anal. \textbf{99} (1991), 262--292.

\bibitem[BRS]{BRS}
D.~Blecher, Z.-J.~Ruan, A.~Sinclair, \emph{A characterization of operator algebras}, J. Funct. Anal. \textbf{89} (1990), 188--201.

\bibitem[DJT]{DJT}
J.~Diestel, H.~Jarchow, A.~Tonge, \emph{Absolutely Summing Operators}, Cambridge University Press, 1995.

\bibitem[Du]{Du}
P.~Duren, \emph{Theory of $H^{p}$ spaces}, Academic Press, 1970.

\bibitem[ER]{ER}
E.~G.~Effros, Z.-J.~Ruan, \emph{Operator Spaces}, Oxford University Press, 2000.

\bibitem[Fr]{Fr}

D.~H.~Fremlin, \emph{Measure Theory. Volume II: Broad Foundations}, Torres Fremlin, 2001.

\bibitem[Gr]{Gr}
A.~Grothendieck, \emph{R\'{e}sum\'{e} de la th\'{e}orie m\'{e}trique des produits tensoriels topologiques}, Boll. Soc. Mat. S\~{a}o-Paulo \textbf{8} (1953), 1--79.

\bibitem[KP]{KP}
N.~J. Kalton, A. Pe\l{}czy\'{n}ski, \emph{Kernels of surjections from $\mathcal{L}_1$-spaces with an application to Sidon sets}, Math. Ann. \textbf{301} (1997), 135--158.

\bibitem[LPP]{LPP}
F.~Lust-Piquard, G.~Pisier, \emph{Non-commutative Khintchine and Paley inequalities}, Ark. f\"{o}r Mat \textbf{29} (1991), 241--260.

\bibitem[Oi]{Oi}
T.~Oikhberg, \emph{Subspaces of maximal operator spaces}, Integral Equations Operator Theory \textbf{48} (2004), 81--102.

\bibitem[Pa]{Pa}
V.~Paulsen, \emph{Completely Bounded Maps and Operator Algebras}, Cambridge University Press, 2002.


\bibitem[Pi1]{Pi1}
G.~Pisier, \emph{Factorization of Linear Operators and Geometry of Banach Spaces}, American Mathematical Society, 1986.

\bibitem[Pi2]{Pi2}
G.~Pisier, \emph{Introduction to Operator Space Theory}, Cambridge University Press, 2003.


\bibitem[Ru]{Ru}
Z.-J.~Ruan, \emph{The operator amenability of $\op{A}(G)$}, Amer. J. Math. \textbf{117} (1995), 1449--1474.  


\end{thebibliography}
\end{document}